\renewcommand{\depart}{u} 
\renewcommand{\Depart}{U} 
\renewcommand{\DEPART}{\mathbb{U}} 
\renewcommand{\arrivee}{v} 
\renewcommand{\Arrivee}{V} 
\renewcommand{\ARRIVEE}{\mathbb{V}} 
\newcommand{\DEPARTbis}{\mathbb{W}} 
\newcommand{\arriveebis}{w}
\newcommand{\ARRIVEEbis}{\mathbb{W}}
\renewcommand{\arriveeter}{w}
\renewcommand{\ARRIVEEter}{\mathbb{W}}
 \newcommand{\FonctionDepartArrivee}{h}
\newcommand{\LinearQuadraticFonctionuncertain}{q}
\newcommand{\SquareMapping}{s}
\newcommand{\DirectionRelation}{\mathfrak{D}}
\title{Conditional Infimum and Hidden Convexity\\
in Optimization}
\author{Jean-Philippe Chancelier and Michel De Lara, \\ 
  CERMICS, Ecole des Ponts, Marne-la-Vall\'ee, France}
\begin{document}

\maketitle

\begin{abstract}
  Detecting hidden convexity is one of the tools to address nonconvex
  minimization problems.
  After giving a formal definition of hidden convexity, 
  we introduce the notion of conditional infimum, as it will prove instrumental in
  detecting hidden convexity. 
  We develop the theory of the conditional infimum, 
  and we establish a tower property, relevant for minimization problems. 
  Thus equipped, we provide a sufficient condition for 
  hidden convexity in nonconvex minimization problems. We illustrate our result on
  nonconvex quadratic minimization problems.
  We conclude with perspectives for using
  the conditional infimum in relation to
  the so-called S-procedure, to couplings and conjugacies,
  and to lower bound convex programs.
\end{abstract}



\section{Introduction}

Convex minimization problems display well-known features that make their
numerical resolution appealing. In particular, 
convex minimization algorithms are known to be simpler and less computationally
intensive, in comparison with nonconvex ones.
Thus, it is tempting to ``convexify'' a problem in order to solve it,
rather than to use nonconvex optimization.
More generally, it has long been searched how to relate a nonconvex minimization problem
to a convex one.
If the original nonconvex minimization problem is formulated
on a convex set, then the convex lower envelope of the objective function 
has the same minimum and a solution (argmin) of the original nonconvex problem 
is solution of the convex lower envelope problem \cite[Proposition~11]{Horst:1984}.
Needless to say that computing the lower envelope can be at least as difficult as
solving the original nonconvex problem.
This is why other approaches have been developed, like convexification 
by domain or range transformation as exposed
in \cite{Horst:1984} which provides a survey.
The vocable of ``hidden convexity'' covers different approaches:
duality and biduality analysis like
in~\cite{BenTal-Ben-Teboulle:1996};
identifying classes of nonconvex optimization problems whose convex
relaxations have optimal solutions which at the same time are global optimal
solutions of the original nonconvex
problems~\cite{Ben-Tal-den-Hertog-Laurent:2011}.
A survey of hidden convex optimization can be found in \cite{XiaYong:2020},
with its focus on three widely used ways to reveal the hidden
convex structure for different classes of nonconvex optimization problems. 
In this paper, we propose a definition of ``hidden convexity''
and a new way to reveal it by means of what we call the ``conditional infimum''.
We discuss these two points now.

Regarding hidden convexity, we consider a set~$\UNCERTAIN$, 
a function \( \fonctionuncertain : \UNCERTAIN \to \barRR \)
and a subset \( \Uncertain \subset \UNCERTAIN \).
We say that the minimization problem\footnote{%
In this paper, we address hidden convexity in \emph{optimization}.
In~\cite{Chancelier-DeLara:2021_ECAPRA_JCA}, we dealt with the stronger
notion of hidden convexity in a \emph{function}, that we characterized by means
of one-sided linear conjugacies.
Let \( \theta: \UNCERTAIN \to \PRIMAL \) be a mapping.
We say that the function \( \fonctionuncertain : \UNCERTAIN \to \barRR \)
displays hidden convexity with respect to the mapping~\( \theta \)
if there exists a vector space~$\PRIMAL$ and 
a convex function \( \fonctionprimal: \PRIMAL \to \barRR \) 
such that \( \fonctionuncertain = \fonctionprimal \circ \theta \).
Thus, hidden convexity in a function resorts to a property of \emph{convex factorization}.
\label{ft:hidden_convexity_function}}
\( \min_{\uncertain \in \Uncertain }
\fonctionuncertain\np{\uncertain} \) displays \emph{hidden convexity}
if there exists a vector space~$\PRIMAL$, 
a convex function \( \fonctionprimal: \PRIMAL \to \barRR \) 
and a convex subset \( \Convex \subset \PRIMAL \) such that 
\( \min_{\uncertain \in \Uncertain }
\fonctionuncertain\np{\uncertain} =
\min_{\primal \in \Convex} \fonctionprimal\np{\primal} \).
So, in the definition we propose, the original minimization problem
is not formulated on a vector space, even less on a convex domain

Now for the conditional infimum.
The operation of marginalization (that is,
partial minimization as in \cite[Theorem~5.3]{Rockafellar:1970})
is widely used in optimization, especially in the context of
studying how optimal values and optimal solutions depend on the
parameters in a given problem.
Another operation through which new functions are constructed by minimization is
the so-called epi-composition, developed by Rockafellar
(see \cite[p.~27]{Rockafellar-Wets:1998} and the historical note in
\cite[p.~36]{Rockafellar-Wets:1998});
epi-composition is called 
infimal postcomposition in \cite[p.~214]{Bauschke-Combettes:2017}.
Notice that the vocable of marginalization hinges at a corresponding operation
(of partial integration) in probability theory.
A nice parallelism between optimization and probability theories
has been pointed out by several authors
\cite{DelMoral:1997,Akian-Quadrat-Viot:1998}. 
Following this approach, we have relabelled epi-composition
as \conditionalinfimum\
in \cite[Definition~2.4]{Chancelier-DeLara:2021_ECAPRA_JCA},
with the notation \( \ConditionalInfimum{\theta}{\fonctionprimal} \).
The  expression ``conditional infimum'' appears in the conclusion part of
 \cite{Witsenhausen:1975b}, where it is defined with respect to a partition field, that is, a subset of the power set which
 is closed \wrt\ (with respect to) union and intersection, countable or not;
 however, the theory is not developed.
 Related notions can be found --- but defined on a measurable space
 equipped with a unitary Maslov measure ---
 in the following works:
 in \cite{DelMoral:1997}, the theory of performance is sketched
 and the ``conditional performance'' is defined;
 in \cite{Akian-Quadrat-Viot:1998}, the ``conditional cost excess'' is defined;
 in \cite{Barron-Suprema-Jensen:2003}, the ``conditional essential supremum''  is defined.  
 In this paper, we define the conditional infimum with respect to a
 correspondence between two sets, without requiring measurable structures,
 and we study its properties in the perspective
 of applications to optimization.
 \medskip

The paper is organized as follows.
In Sect.~\ref{Conditional_infimum_with_respect_to_a_correspondence},
we provide a definition of the
conditional infimum (and supremum) of a function with respect to a
correspondence (between two sets), followed by examples and main properties.
In Sect.~\ref{Applications_of_the_conditional_infimum_to_minimization_problems}
we develop applications of the conditional infimum to minimization problems.
In
Sect.~\ref{Detecting_hidden_convexity_in_optimization_problems_using_conditional_infimum},
we provide a sufficient condition for 
  hidden convexity in nonconvex minimization problems, and we illustrate our result on
  nonconvex quadratic minimization problems.
In the concluding Sect.~\ref{Conclusion}, we point out perspectives for using
  the conditional infimum in other contexts, namely in relation to
  the so-called S-procedure, to couplings and conjugacies,
  and to lower bound convex programs.

\section{Conditional infimum with respect to a correspondence}
\label{Conditional_infimum_with_respect_to_a_correspondence}

In~\S\ref{Definitions_of_the_conditional_infimum_and_supremum}, we provide a definition of the
conditional infimum (and supremum) of a function with respect to a
correspondence between two sets, followed by examples
in~\S\ref{Examples_of_the_conditional_infimum_and_supremum}.
Then, we expose properties of the conditional infimum and supremum
in~\S\ref{Properties_of_the_conditional_infimum_and_supremum}.

As we manipulate functions with values in~$\barRR = [-\infty,+\infty] $,
we adopt the Moreau \emph{lower ($\LowPlus$)} and \emph{upper ($\UppPlus$) additions} \cite{Moreau:1970},
which extend the usual addition~($+$) with 
\( \np{+\infty} \LowPlus \np{-\infty}=\np{-\infty} \LowPlus \np{+\infty}=-\infty \) and
\( \np{+\infty} \UppPlus \np{-\infty}=\np{-\infty} \UppPlus \np{+\infty}=+\infty \).

\subsection{Definitions of the conditional infimum and supremum}
\label{Definitions_of_the_conditional_infimum_and_supremum}

We now give formal definitions of the conditional infimum and supremum
with respect to a correspondence between two sets. 
In optimization, one is more familiar with set-valued mappings 
\cite[Chapter~5]{Rockafellar-Wets:1998} than with correspondences,
though the two notions are essentially equivalent.
We favor the notion of correspondence because, regarding conditional infimum,
we obtain a nicer formula with the composition of correspondences
than with the composition of set-valued mappings  (see
Footnote~\ref{ft:tower_property}). 

\subsubsubsection{Recalls on correspondences}

We recall that a correspondence~$\correspondence$ between two sets~$\DEPART$
and~$\ARRIVEE$ is a subset \( \correspondence \subset \DEPART\times\ARRIVEE \).
We denote \( \depart\correspondence\arrivee \iff 
\np{\depart,\arrivee} \in \correspondence \).
A \emph{foreset} of a correspondence~$\correspondence$ is
any set of the form \( \correspondence\arrivee = 
\bset{\depart \in \DEPART}{ \depart\correspondence\arrivee } \),
where \( \arrivee \in \ARRIVEE \),
or, by extension, of the form
\( \correspondence\Arrivee = \bset{\depart \in \DEPART}%
{ \exists \arrivee\in\Arrivee, \, \depart\correspondence\arrivee } \),
where \( \Arrivee \subset \ARRIVEE \).
An \emph{afterset} of a correspondence~$\correspondence$ is
any set of the form \( \depart\correspondence=
\bset{\arrivee \in \ARRIVEE}{ \depart\correspondence\arrivee } \),
where \( \depart \in \DEPART \),
or, by extension, of the form
\( \Depart\correspondence= \bset{\arrivee \in \ARRIVEE}%
{ \exists  \depart \in \DEPART \eqsepv \depart\correspondence\arrivee } \),
where \( \Depart \subset \DEPART \).
The \emph{domain} and the \emph{range} of the correspondence~$\correspondence$
are given respectively by 
\( \dom\correspondence = \bset{ \depart \in \DEPART }%
{ \depart\correspondence \not= \emptyset } \) 
and
\( \range\correspondence = \bset{ \arrivee \in \ARRIVEE }%
{ \correspondence\arrivee \not= \emptyset } \).
%
%
We denote by \( \correspondence^{-1} \subset \ARRIVEE\times\DEPART \) the correspondence
between the two sets~\( \ARRIVEE \) and \( \DEPART \) given by
\( \arrivee \correspondence^{-1} \depart \iff
\depart \correspondence \arrivee \).
%
For any pair of correspondences $\correspondence$ on~$\DEPART\times\ARRIVEE$
and $\correspondencebis$ on~$\ARRIVEE\times\ARRIVEEter$, the \emph{composition}
\( \correspondence\correspondencebis \) denotes the correspondence
between the two sets~\( \DEPART \) and \( \ARRIVEEter \) given by, 
for any \( \np{\depart,\arriveeter} \in \DEPART\times\ARRIVEEter \),
\( \depart \np{\correspondence\correspondencebis} \arriveeter
  \iff \exists \arrivee \in \ARRIVEE \) such that 
\( \depart \correspondence \arrivee \) and 
\( \arrivee \correspondencebis \arriveeter \).


\subsubsubsection{Optimization over a subset}

Let $ \fonctiondepart : \DEPART \to \barRR $ be a function.
Like in Probability theory\footnote{%
Even if we draw parallels between optimization and
probability theories, we do not develop the parallelism to its potential full extent
as, for instance, we do not consider the equivalent of a generic probability distribution.
Compared to \cite{DelMoral:1997,Akian-Quadrat-Viot:1998,Akian:1999,Barron-Suprema-Jensen:2003} which consider Maslov
measures and densities --- that is, an analog of probability measures --- 
we could say that, in this paper, we only focus on the theory of the conditional infimum/supremum
for the analog of the uniform probability (see the introduction of~\cite{Akian:1999}).
}
where one starts by defining the conditional
probability \wrt\ a subset of the sample space,
we define 
\begin{equation}
  \InfCond{\fonctiondepart}{\Depart} =
  \inf_{\depart \in \Depart} \fonctiondepart\np{\depart}
  \eqsepv
    \SupCond{\fonctiondepart}{\Depart} =
    \sup_{\depart \in \Depart} \fonctiondepart\np{\depart}
    \eqsepv \forall \Depart \subset \DEPART
  \eqfinp
\label{eq:subset_conditional_infimum}  
\end{equation}
To complete the link with optimization under constraint, we also define
\begin{equation}
  \argminInfCond{\fonctiondepart}{\Depart} =
  \argmin_{\depart \in \Depart} \fonctiondepart\np{\depart}
  \eqsepv
    \argmaxSupCond{\fonctiondepart}{\Depart} =
    \argmax_{\depart \in \Depart} \fonctiondepart\np{\depart}
        \eqsepv \forall \Depart \subset \DEPART
  \eqfinp
\label{eq:subset_conditional_infimum_argmin}  
\end{equation}

\subsubsubsection{Definition of conditional infimum \wrt\ a correspondence} 

In the existing definitions of the conditional infimum of a function in the literature
\cite{DelMoral:1997,Akian-Quadrat-Viot:1998,Barron-Suprema-Jensen:2003},
both the original function and its conditional infimum are defined on a measurable space
equipped with a unitary Maslov measure. 
By contrast, our new definition (below) of the conditional infimum of a function
does not require a measurable space (nor a Maslov measure) but a
correspondence between two sets, a source set and a target set; what is more, for a function whose domain
is the source set, its conditional infimum is defined on the target set. 

\begin{definition}
  Let $ \fonctiondepart : \DEPART \to \barRR $ be a function
  and $\correspondence$ be a correspondence between the sets~$\DEPART$ and~$\ARRIVEE$.
  We define the \emph{conditional infimum} 
  of the function~$\fonctiondepart$ with respect to the correspondence~$\correspondence$ 
  (and resp. the \emph{conditional supremum})
  as the functions \( \InfCond{\fonctiondepart}{\correspondence} : \ARRIVEE \to \barRR \)
  (and resp. \( \SupCond{\fonctiondepart}{\correspondence} : \ARRIVEE \to \barRR \))
  given by
  \begin{subequations}
    \begin{align}
      \InfCond{\fonctiondepart}{\correspondence} : 
      \ARRIVEE \to \barRR \eqsepv
      &
      \InfCond{\fonctiondepart}{\correspondence}\np{\arrivee} 
      = 
      \InfCond{\fonctiondepart}{\correspondence\arrivee} 
                                    \eqsepv \forall \arrivee \in \ARRIVEE
                                    \eqfinv
                                    \label{eq:correspondence_conditional_infimum}
      \\
      \SupCond{\fonctiondepart}{\correspondence} : 
      \ARRIVEE \to \barRR \eqsepv
      &
    \SupCond{\fonctiondepart}{\correspondence}\np{\arrivee} 
     = 
\SupCond{\fonctiondepart}{\correspondence\arrivee} 
                                    \eqsepv \forall \arrivee \in \ARRIVEE
                                    \eqfinv
    \end{align}
    \label{eq:correspondence_conditional_infimum_and_supremum}
  \end{subequations}
where we have used the notation~\eqref{eq:subset_conditional_infimum}.   
  \label{de:conditional_infimum} 
\end{definition}

We adopt the conventions\footnote{%
  Such conventions arise naturally as the mapping 
  \( \Depart \in 2^\DEPART \mapsto \inf_{\Depart} \fonctiondepart =
  \inf_{\depart \in \Depart} \fonctiondepart\np{\depart} \) is nonincreasing
  and as the mapping 
  \( \Depart \in 2^\DEPART \mapsto \sup_{\Depart} \fonctiondepart =
  \inf_{\depart \in \Depart} \fonctiondepart\np{\depart} \) is nondecreasing.
  However, one has to be careful because 
  \( \inf_{\Depart} \fonctiondepart \leq \sup_{\Depart} \fonctiondepart \) if
  \( \Depart \neq \emptyset \), but 
  \( +\infty = \inf_{\emptyset} \fonctiondepart > \sup_{\emptyset } \fonctiondepart = -\infty \).
} that 
\cite[p.~1]{Rockafellar-Wets:1998} 
\begin{equation}
  \inf_{\emptyset } \fonctiondepart = 
  \inf_{\depart \in \emptyset } \fonctiondepart\np{\depart} = +\infty 
  \mtext{ and } 
  \sup_{\emptyset } \fonctiondepart =
  \sup_{\depart \in \emptyset } \fonctiondepart\np{\depart} = -\infty \eqfinp
  \label{eq:correspondence_convention}
\end{equation}
As a consequence of~\eqref{eq:correspondence_conditional_infimum_and_supremum}
and~\eqref{eq:correspondence_convention}, the conditional infimum takes the value~$ +\infty $
(and the conditional supremum takes the value~$ -\infty $) outside 
\( \range\correspondence  \), that is, 
\begin{subequations}
  \begin{align}
    \InfCond{\fonctiondepart}{\correspondence}\np{\arrivee} 
    &= 
      +\infty 
      \eqsepv \forall \arrivee \not\in \range\correspondence     
      \eqfinv
    \\
    \SupCond{\fonctiondepart}{\correspondence}\np{\arrivee} 
    &= 
      -\infty 
      \eqsepv \forall \arrivee \not\in \range\correspondence
      \eqfinp
  \end{align}
\end{subequations}
Recall that the \emph{effective domain} of a function 
\( \fonctionarrivee : \ARRIVEE \to \barRR \) is 
\( \dom{\fonctionarrivee}=
\defset{ \arrivee \in \ARRIVEE }{\fonctionarrivee\np{\arrivee} <+\infty}\). 
Therefore, regarding the effective domain, we have the inclusion\footnote{%
  To the left hand side of the inclusion, the notation $\dom$ refers to the 
  effective domain of a \emph{function}, whereas to the right hand side, the notation
$\dom$ refers to the domain of a \emph{correspondence}.}
\begin{equation}
  \dom\bp{\ConditionalInfimum{\correspondence}{\fonctiondepart}} \subset
  \range\correspondence = \dom \correspondence^{-1}
  \eqfinp
  \label{eq:dom_ConditionalInfimum_subset_range}
\end{equation}

All properties about the conditional infimum are easily carried
to the conditional supremum (and conversely) because
\begin{equation}
  -\SupCond{\fonctiondepart}{\correspondence} 
  =  \InfCond{-\fonctiondepart}{\correspondence}
  \eqsepv
  -\InfCond{\fonctiondepart}{\correspondence} 
  = \SupCond{-\fonctiondepart}{\correspondence} \eqfinp
  \label{eq:correspondence_conditional_supremum_infimum}
\end{equation}
In the sequel, we will favour the conditional infimum
as we are interested in applications to minimization problems. 

\subsubsubsection{Example}

  Let $\DirectionRelation \subset \RR^d \times \RR^d $ 
  be the binary relation (hence, correspondence) given by
  \( \primal \DirectionRelation \primal' \iff 
\exists \lambda\in\RR\setminus\na{0} \eqsepv \primal=\lambda\primal' \).
Thus, $\DirectionRelation$ is the equivalence relation on~$\RR^d$
whose classes are~$\na{0}$ and the (unoriented) directions of~$\RR^d$. 
  Let \( \EuclidianNorm{\cdot} \) be the Euclidian norm on~$\RR^d$, 
  $\matrice$ be a matrix with $d$~rows and $p$ columns,
  and $\vecteur \in \RR^p$ be a vector. 
 If we set \( \fonctionprimal\np{\primal}=
 \EuclidianNorm{\matrice\primal-\vecteur}^2 \),
 for \( \primal\in\RR^d \), an easy computation leads to 
\begin{equation*}
\bp{\forall \primal\in\RR^d} \qquad 
  \InfCond{\fonctionprimal}{\DirectionRelation}\np{\primal}=
  \begin{cases}
    \EuclidianNorm{\vecteur}^2 & \mtext{ if } \matrice\primal=0
\eqfinv
\\
  \EuclidianNorm{\vecteur}^2 
        - \frac{ \proscal{\matrice\primal}{\vecteur}^2 }{ \EuclidianNorm{\matrice\primal}^2 }
& \mtext{ if } \matrice\primal \neq 0
\eqfinp 
  \end{cases}
 \end{equation*}

\subsubsubsection{Conditional infimum \wrt\ a 
correspondence induced by a set-valued mapping
\( \Theta : \DEPART \rightrightarrows \ARRIVEE \)}

Let \( \Theta : \DEPART \rightrightarrows \ARRIVEE \) be a set-valued mapping,
that is, \( \Theta : \DEPART \to  2^\ARRIVEE \).
We define the \emph{graph} of~$\Theta$ by
\begin{equation}
  \graph_{ \Theta } = \bset{ \np{\depart,\arrivee} \in \DEPART \times \ARRIVEE }%
  { \arrivee \in \Theta\np{\depart} } \subset \DEPART \times \ARRIVEE 
  \eqfinp
  \label{eq:graph_set-valued_mapping}
\end{equation}
As the graph~$\graph_{ \Theta }$ defines a correspondence between the two sets~\(
\DEPART \) and \( \ARRIVEE \), we introduce 
specific definitions and notations. 
For any function $ \fonctiondepart : \DEPART \to \barRR $, 
we define the \emph{conditional infimum} 
\( \InfCond{\fonctiondepart}{\Theta} : \ARRIVEE \to \barRR \)
of the function~$\fonctiondepart$ with respect to the set-valued mapping~$\Theta$ 
(and the \emph{conditional supremum}
\( \SupCond{\fonctiondepart}{\Theta} : \ARRIVEE \to \barRR \))
by
\begin{subequations}
  \label{eq:ConditionalInfimum_set-valued_mapping}
  \begin{align}
    \InfCond{\fonctiondepart}{ \Theta }\np{\arrivee}
    & = \InfCond{\fonctiondepart}{ \graph_{ \Theta } }\np{\arrivee}
      \eqsepv \forall \arrivee \in \ARRIVEE
      \eqfinv 
    \\
    \SupCond{\fonctiondepart}{ \Theta }\np{\arrivee}
    &= \SupCond{\fonctiondepart}{ \graph_{ \Theta } }\np{\arrivee}
      \eqsepv \forall \arrivee \in \ARRIVEE
      \eqfinp
  \end{align}
\end{subequations}


\subsubsubsection{Conditional infimum \wrt\ a 
correspondence induced by a set-valued mapping 
\( \Phi : \ARRIVEE \rightrightarrows \DEPART \) (the other way round)}

Let \( \Phi : \ARRIVEE \rightrightarrows \DEPART \) be a set-valued mapping.
Beware that, to the difference of the set-valued mapping  
\( \Theta : \DEPART \rightrightarrows \ARRIVEE \) above,
the source set is~$\ARRIVEE$ and the target set is~$\DEPART$.
We consider such set-valued mappings to make the connection with how they are
used in optimization to handle constraints (see
Footnote~\ref{ft:set-valued_mappings_optimization}). 
Regarding the graph of~$\Phi$ in~\eqref{eq:graph_set-valued_mapping}, we have 
\begin{subequations}
  \begin{align}
  \graph_{ \Phi } 
&= 
\bset{ \np{\arrivee,\depart} \in \ARRIVEE \times \DEPART }%
  { \depart \in \Phi\np{\arrivee} } \subset \ARRIVEE \times \DEPART 
                     \eqfinv
\intertext{and, defining \( \Converse{\Phi} : \DEPART \rightrightarrows \ARRIVEE \) 
by \( \Converse{\Phi}\np{\depart} =\nset{\arrivee \in \ARRIVEE }%
   { \depart \in \Phi\np{\arrivee} } \), we get that}
   \npConverse{ \graph_{ \Phi } }
&= 
\bset{ \np{\depart,\arrivee} \in \DEPART \times \ARRIVEE }%
   { \depart \in \Phi\np{\arrivee} } 
    = \graph_{ \Converse{\Phi} }
\subset \DEPART \times \ARRIVEE 
  \eqfinp    
  \end{align}
\end{subequations}
For any function $ \fonctiondepart : \DEPART \to \barRR $, 
we have the properties\footnote{%
  Set-valued mappings are used in optimization because they offer a handy way to
denote constraints as in the left hand side expressions
in~\eqref{eq:ConditionalInfimum_set-valued_mapping_Converse}.
We will explain in Footnote~\ref{ft:tower_property} why we have chosen to favor correspondences
rather than set-valued mappings.
\label{ft:set-valued_mappings_optimization}}
\begin{subequations}
  \begin{align}
\inf_{ \depart \in \Phi\np{\arrivee} }\fonctiondepart\np{\depart}
&= 
\InfCond{\fonctiondepart}{ \graph_{ \Phi }^{-1} }\np{\arrivee}
= \InfCond{\fonctiondepart}{ \Converse{\Phi} }\np{\arrivee}
      \eqsepv \forall \arrivee \in \ARRIVEE
            \eqfinv 
    \\
\sup_{ \depart \in \Phi\np{\arrivee} }\fonctiondepart\np{\depart}
&= 
\SupCond{\fonctiondepart}{ \graph_{ \Phi }^{-1} }\np{\arrivee}
= \SupCond{\fonctiondepart}{ \Converse{\Phi} }\np{\arrivee}
      \eqsepv \forall \arrivee \in \ARRIVEE
      \eqfinv
  \end{align}
\label{eq:ConditionalInfimum_set-valued_mapping_Converse}
\end{subequations}
where we have used the notations in~\eqref{eq:ConditionalInfimum_set-valued_mapping}.

\subsubsubsection{Conditional infimum \wrt\ a 
correspondence induced by a mapping \( \theta : \DEPART \to \ARRIVEE \)}

Let \( \theta : \DEPART \to \ARRIVEE \) be a  mapping.
The graph of~$\theta$ in~\eqref{eq:graph_set-valued_mapping} is now
\begin{equation}
  \graph_{ \theta } = \bset{ \np{\depart,\arrivee} \in \DEPART \times \ARRIVEE }%
  { \theta\np{\depart} = \arrivee } \subset \DEPART \times \ARRIVEE 
  \eqfinp
  \label{eq:graph}
\end{equation}
For any function $ \fonctiondepart : \DEPART \to \barRR $, 
Equations~\eqref{eq:ConditionalInfimum_set-valued_mapping} give
(with the mapping~$\theta$ identified with the set-valued mapping $\depart
\mapsto \ba{\theta(\depart)}$)
\begin{subequations}
  \begin{align}
    \InfCond{\fonctiondepart}{ \theta }\np{\arrivee}
    & = \InfCond{\fonctiondepart}{ \graph_{ \theta } }\np{\arrivee}
    =  
    \inf_{\depart \in \theta^{-1}\np{\na{\arrivee}}} \fonctiondepart\np{\depart} 
      \eqsepv \forall \arrivee \in \ARRIVEE
      \eqfinv 
      \label{eq:ConditionalInfimum}
    \\
    \SupCond{\fonctiondepart}{ \theta }\np{\arrivee}
    &= \SupCond{\fonctiondepart}{ \graph_{ \theta } }\np{\arrivee}
    =  
    \sup_{\depart \in \theta^{-1}\np{\na{\arrivee}}} \fonctiondepart\np{\depart} 
      \eqsepv \forall \arrivee \in \ARRIVEE
      \eqfinp
  \end{align}
\end{subequations}
As \( \theta : \DEPART \to \ARRIVEE \) is a mapping,
it induces a set-valued mapping
\( \Converse{\theta} :  \ARRIVEE \rightrightarrows \DEPART \).
Using Equations~\eqref{eq:ConditionalInfimum_set-valued_mapping},
for any function $\fonctionarrivee:\ARRIVEE \to \barRR$, we have the following properties:
\begin{subequations}
  \begin{align}
    \InfCond{\fonctionarrivee}{ \Converse{\theta} }\np{\depart}
=    \InfCond{\fonctionarrivee}{\graph_{ \theta }^{-1} }\np{\depart}
    &=
      \inf_{\arrivee = \theta\np{\depart} }
      \fonctionarrivee\np{\arrivee}
      =
      \bp{\fonctionarrivee \circ \theta }\np{\depart}
      \eqsepv \forall \depart \in \DEPART
      \eqfinv 
      \label{eq:mapping_composition_and_ConditionalInfimum}
    \\
    \SupCond{\fonctionarrivee}{ \Converse{\theta} }\np{\depart}
=   \SupCond{\fonctionarrivee}{\graph_{\theta}^{-1} }\np{\depart}
    &=  
      \sup_{\arrivee = \theta\np{\depart} }
      \fonctionarrivee\np{\arrivee}
      =
      \bp{\fonctionarrivee \circ \theta }\np{\depart}
      \eqsepv \forall \depart \in \DEPART
      \eqfinp
  \end{align}
\end{subequations}

\subsection{Examples}
\label{Examples_of_the_conditional_infimum_and_supremum}

\subsubsubsection{Examples with characteristic functions}

For any subset \( \Uncertain \subset \UNCERTAIN \) of a set~\( \UNCERTAIN \),
$\delta_{\Uncertain} : \UNCERTAIN \to \barRR $ denotes the \emph{characteristic function} of the
set~$\Uncertain$: 
\( \delta_{\Uncertain}\np{\uncertain} = 0 \) if \( \uncertain \in \Uncertain \),
and \( \delta_{\Uncertain}\np{\uncertain} = +\infty \) 
if \( \uncertain \not\in \Uncertain \).

For any correspondence~$\correspondence$ between the sets~$\DEPART$ and $\ARRIVEE$, and
for any \( \depart \in \DEPART \) and any subset \( \Depart \subset \DEPART \),
we have that,
\begin{subequations}
  \begin{align}
    \InfCond{ \delta_{ \{ \depart \} } }{\correspondence} 
    &=  
      \delta_{ \depart\correspondence }   
      \eqsepv &
                \InfCond{ \delta_{ \Depart } }{\correspondence} 
                =  
                \delta_{ \Depart\correspondence }   
                \eqfinv 
    \\
    \SupCond{ -\delta_{ \{ \depart \} } }{\correspondence} 
    &=  
      -\delta_{ \depart\correspondence }   
      \eqsepv &
                \SupCond{ -\delta_{ \Depart } }{\correspondence} 
                =  
                \delta_{ -\Depart\correspondence }   
                \eqfinp
  \end{align}
  \label{eq:correspondence_SupCond_delta}
\end{subequations}
As a consequence, the conditional infimum and supremum
with respect to a correspondence characterize this latter,
as the mappings
\( \correspondence \mapsto \InfCond{\cdot}{\correspondence} \)
and \( \correspondence \mapsto \SupCond{\cdot}{\correspondence} \)
are injective.

\subsubsubsection{Examples with rectangular correspondences}

For any two subsets \( \Depart \subset \DEPART \) and
\( \Arrivee \subset \ARRIVEE \), we define the \emph{rectangle correspondence}
\( \Depart{\times}\Arrivee \).
Then, for any function \( \fonctiondepart : \DEPART \to \barRR \), we have that 
\begin{subequations}
  \begin{align*}
    \bInfCond{\fonctiondepart}{\Depart{\times}\Arrivee}\np{\arrivee}
    &=
      \begin{cases}
        \inf_{\depart \in \Depart} \fonctiondepart\np{\depart} 
        & \text{if } \arrivee\in \Arrivee 
        \\ 
        +\infty 
        &\text{if } \arrivee \not\in \Arrivee 
      \end{cases}
          \mtext{ that is, }
   \bInfCond{\fonctiondepart}{\Depart{\times}\Arrivee}
    =
      \InfCond{\fonctiondepart}{\Depart} \UppPlus \delta_{ \Arrivee } 
      \eqfinv          
    \\
    \bSupCond{\fonctiondepart}{\Depart{\times}\Arrivee}\np{\arrivee}
    &=
      \begin{cases}
        \sup_{\depart \in \Depart} \fonctiondepart\np{\depart} 
        & \text{if } \arrivee\in \Arrivee 
        \\ 
        -\infty 
        &\text{if } \arrivee \not\in \Arrivee 
      \end{cases} 
          \mtext{ that is, }
   \bSupCond{\fonctiondepart}{\Depart{\times}\Arrivee}
    =
      \SupCond{\fonctiondepart}{\Depart} \LowPlus \np{ -\delta_{ \Arrivee } }
      \eqfinp 
  \end{align*}
\end{subequations}


\subsubsubsection{Marginalization operations}

Let $\DEPART$ and~$\ARRIVEE$ be two sets,
\( \Delta_\ARRIVEE \) be the diagonal of \( \ARRIVEE^2 \),
and \( \DEPART{\times}\Delta_\ARRIVEE \) be the correspondence between the
sets~$\DEPART{\times}\ARRIVEE$ and~$\ARRIVEE$ given by 
\(   \np{\depart,\arrivee} \bp{\DEPART{\times}\Delta_\ARRIVEE} \arrivee'
\iff \arrivee=\arrivee' \).
The foresets of the correspondence~\( \DEPART{\times}\Delta_\ARRIVEE \)
satisfy \( \bp{\DEPART{\times}\Delta_\ARRIVEE} \arrivee = \DEPART{\times}\na{\arrivee} \),
for any \( \arrivee \in \ARRIVEE \).
The following conditional infimum and supremum of a 
function~$\FonctionDepartArrivee : \DEPART{\times}\ARRIVEE \to \barRR$ 
with respect to the correspondence~\( \DEPART{\times}\Delta_\ARRIVEE \)
provide the \emph{marginalization operations}:
\begin{subequations}
  \begin{align}
                                    \InfCond{\FonctionDepartArrivee}{\DEPART{\times}\Delta_\ARRIVEE}\np{\arrivee} 
    &= 
      \inf_{\depart \in \DEPART} \FonctionDepartArrivee\np{\depart,\arrivee} 
      \eqsepv \forall \arrivee \in \ARRIVEE
      \eqfinv
    \\
                                    \SupCond{\FonctionDepartArrivee}{\DEPART{\times}\Delta_\ARRIVEE}\np{\arrivee} 
    &= \sup_{\depart \in \DEPART} \FonctionDepartArrivee\np{\depart,\arrivee} 
      \eqsepv \forall \arrivee \in \ARRIVEE
      \eqfinp
  \end{align}
\end{subequations}


\subsection{Properties of the conditional infimum and supremum}
\label{Properties_of_the_conditional_infimum_and_supremum}

We expose properties of the conditional infimum and supremum.
We recall that 
the \emph{strict epigraph} of a function $ \fonctiondepart : \DEPART \to \barRR $ is
defined by the subset 
\begin{equation}
  \StrictEpigraph\fonctiondepart =
  \defset{ (\depart,t) \in \DEPART\times\RR }{%
    \fonctiondepart(\depart) < t }
 \subset  \DEPART\times\RR 
 \eqfinv
 \label{eq:StrictEpigraph}
\end{equation}
hence \( \StrictEpigraph\fonctiondepart \)
can be understood as a correspondence between~$\DEPART$ and $\barRR $.

\begin{proposition}
  \quad
  \begin{enumerate}
  \item 
    Strict epigraph:\\ 
    for any function $ \fonctiondepart : \DEPART \to \barRR $
    and for any correspondence~$\correspondence$ on~$\DEPART\times\ARRIVEE$, 
    we have that
    \begin{equation}
      \StrictEpigraph\InfCond{\fonctiondepart}{\correspondence}=
      \Converse{\correspondence} \bp{\StrictEpigraph\fonctiondepart}
      \eqfinv
      \label{eq:correspondence_conditional_infimum_strict_epigraph} 
    \end{equation}
    where, on the right hand side, \( \StrictEpigraph\fonctiondepart \) is
    understood as a correspondence between~$\DEPART$ and $\barRR $,
    and \(  \Converse{\correspondence} \bp{\StrictEpigraph\fonctiondepart}
    \)
    as a composition of two correspondences, hence as a subset of \(
    \ARRIVEE\times\RR \).
  \item
    Linearity and sublinearity \wrt\ min-plus, $\wedge$ and $\vee$ operations:\\ 
    for any correspondence~$\correspondence$ on~$\DEPART\times\ARRIVEE$, 
    we have that
    \begin{subequations}
      \begin{align}
        \intertext{$\bullet$ for any family \( \np{\fonctiondepart_i}_{i \in I} \) of functions 
        $ \fonctiondepart_i : \DEPART \to \barRR $,}
        \InfCond{ \Bwedge_{i \in I} \fonctiondepart_i }{\correspondence} 
        &=
          \Bwedge_{i \in I} \InfCond{ \fonctiondepart_i }{\correspondence} 
          \eqfinv 
          \label{eq:correspondence_conditional_infimum_properties_wedge}
        \\ 
        \Bvee_{i \in I} \InfCond{ \fonctiondepart_i }{\correspondence} 
        &\leq
          \InfCond{ \Bvee_{i \in I} \fonctiondepart_i }{\correspondence} 
          \eqfinv 
          \intertext{$\bullet$ for any function $ \fonctiondepart : \DEPART \to \barRR $ 
          and \( r \in \barRR \),}     
        \InfCond{\fonctiondepart \UppPlus r}{\correspondence}
        &= 
           \InfCond{\fonctiondepart}{\correspondence} \UppPlus r 
           \eqfinv 
           \intertext{$\bullet$ for any  functions $ \fonctiondepart : \DEPART \to \barRR $ 
           and $ \fonctiondepartbis : \DEPART \to \barRR $,}
           \InfCond{\fonctiondepart}{\correspondence} 
           \UppPlus \InfCond{\fonctiondepartbis}{\correspondence} 
        &\leq
          \InfCond{\fonctiondepart \UppPlus \fonctiondepartbis}{\correspondence}
          \eqfinp
      \end{align}
    \end{subequations}
  \item 
    Monotony with respect to functions:\\
    for any correspondence~$\correspondence$ on~$\DEPART\times\ARRIVEE$, 
    we have that
    \begin{subequations}
      \begin{align}
        \intertext{$\bullet$ for any functions $ \fonctiondepart : \DEPART \to \barRR $ and $ \fonctiondepartbis : \DEPART \to \barRR $,}
        \fonctiondepart \leq \fonctiondepartbis \implies
        \InfCond{\fonctiondepart}{\correspondence} 
        &\leq 
          \InfCond{\fonctiondepartbis}{\correspondence} 
          \eqfinv \label{eq:correspondence_conditional_infimum_properties_monotony}
        \\
        \InfCond{\fonctiondepart}{\DEPART}
        \leq 
        \InfCond{\fonctiondepart}{\correspondence\ARRIVEE} 
        &\leq \InfCond{\fonctiondepart}{\correspondence}
          (\arrivee) \eqsepv \forall \arrivee \in \ARRIVEE
          \eqfinv 
          %
          \intertext{$\bullet$ for any function $ \fonctiondepart : \DEPART \to \barRR $ 
          and for any nondecreasing function \( \varphi : \barRR \to \barRR \),}
        \varphi \circ \InfCond{\fonctiondepart }{\correspondence} 
        &\leq
          \InfCond{\varphi \circ \fonctiondepart}{\correspondence}
          \eqfinp
      \end{align}
    \end{subequations}
  \item 
    Two correspondences on~$\DEPART\times\ARRIVEE$:\\
    for any pair $\correspondence$, $\correspondencebis$ of correspondences
    on~$\DEPART\times\ARRIVEE$ and for any 
    function $ \fonctiondepart : \DEPART \to \barRR $,
    we have that
    \begin{subequations}
      \begin{align}
        \bInfCond{f}{\correspondence \cup \correspondencebis}
        &= 
          \InfCond{f}{\correspondence} \bwedge \InfCond{f}{\correspondencebis} 
          \eqfinv 
        \\
        \bInfCond{f}{\correspondence \cap \correspondencebis}
        &\geq 
          \InfCond{f}{\correspondence} \bvee \InfCond{f}{\correspondencebis} 
          \eqfinv 
        \\
        \correspondence \subset \correspondencebis \implies
        \InfCond{f}{\correspondence}
        &\geq \InfCond{f}{\correspondencebis} 
          \eqfinp
          \label{eq:correspondence_conditional_infimum_properties_inclusion} 
      \end{align}
    \end{subequations}
  \item 
    Pushforward property:\footnote{%
      This formula for the conditional infimum has the flavour of the
      change of variable formula under pushforward probability.}\\
    for any correspondence $\correspondence$ on~$\DEPART\times\ARRIVEE$,
    for any subset \( \Arrivee \subset \ARRIVEE \)
    and for any function $ \fonctiondepart : \DEPART \to \barRR $,
    we have that 
    \begin{equation}
      \bInfCond{ \nInfCond{\fonctiondepart}{\correspondence} }{\Arrivee}
      = 
        \bInfCond{\fonctiondepart}{\correspondence\Arrivee}
          \eqfinp
          \label{eq:correspondence_conditional_infimum_properties_pushforward}
    \end{equation}
  \item 
    Tower property:\footnote{%
      This formula for conditional infima has the flavour of the tower property for
      conditional expectations. Had we defined the conditional infimum not \wrt\ a correspondence,
      but \wrt\ a set-valued mapping, the tower property would write, in a
      reverse way, as 
\( \bInfCond{ \nInfCond{\fonctiondepart}{ \Converse{\Phi} } }{ \Converse{\Psi} }
      = 
      \bInfCond{\fonctiondepart}{ \npConverse{ \Psi \circ \Phi} } \),
making appear the composition~$\Psi \circ \Phi$ of two set-valued mappings 
\( \Psi : \ARRIVEEbis \rightrightarrows \ARRIVEE \) 
and 
\( \Phi : \ARRIVEE \rightrightarrows \DEPART \) 
as in \cite[p.~151]{Rockafellar-Wets:1998}.
Indeed, the composition~$\Psi \circ \Phi$ 
satisfies \( \graph_{\Psi \circ \Phi}=\graph_{\Phi}\graph_{\Psi} \), hence
\( \graph_{ \npConverse{ \Psi \circ \Phi} }=
\npConverse{ \graph_{\Psi \circ \Phi} }=
\npConverse{ \graph_{\Phi}\graph_{\Psi} }=
\npConverse{ \graph_{\Psi} } \npConverse{ \graph_{\Phi} } \).
We prefer the formula
\(  \bInfCond{ \nInfCond{\fonctiondepart}{\correspondence} }{\correspondencebis} 
      = 
      \bInfCond{\fonctiondepart}{\correspondence\correspondencebis} \)
to the  formula     
\( \bInfCond{ \nInfCond{\fonctiondepart}{ \Converse{\Phi} } }{ \Converse{\Psi} }
      = 
      \bInfCond{\fonctiondepart}{ \npConverse{ \Psi \circ \Phi} } \). 
\label{ft:tower_property} }\\
    for any pair of correspondences $\correspondence$ on~$\DEPART\times\ARRIVEE$
    and $\correspondencebis$ on~$\ARRIVEE\times\ARRIVEEter$,
    and for any function $ \fonctiondepart : \DEPART \to \barRR $,
    we have that 
    \begin{equation}
      \bInfCond{ \nInfCond{\fonctiondepart}{\correspondence} }{\correspondencebis} 
      = 
      \bInfCond{\fonctiondepart}{\correspondence\correspondencebis} 
      \eqfinp
      \label{eq:tower_property}
    \end{equation}
  \item 
    Right composition with mappings:\\
    for any correspondence~$\correspondence$ on~$\DEPART\times\ARRIVEE$, 
    we have that 

    \noindent $\bullet$ for any function $ \fonctiondepartbis : \DEPARTbis \to \barRR $ 
    and for any mapping \( \theta : \DEPART \to \DEPARTbis \),
    \begin{subequations}
      \begin{align}
        \InfCond{\fonctiondepartbis \circ \theta}{\correspondence} 
        &= 
          \bInfCond{\fonctiondepartbis}{  \graph_{ \theta }^{-1}\correspondence }
          \eqfinv  
          \label{eq:correspondence_conditional_infimum_right_composition_correspondence}
          \intertext{$\bullet$ for any function $ \fonctiondepart : \DEPART \to \barRR $ 
          and for any mapping \( \theta : \ARRIVEEbis \to \ARRIVEE \),}
          \InfCond{\fonctiondepart}{\correspondence}  \circ \theta 
        &= 
          \bInfCond{\fonctiondepart}{  \correspondence\graph_{ \theta }^{-1} }
          \eqfinp 
          \label{eq:correspondence_conditional_infimum_correspondence_right_composition}
      \end{align}
    \end{subequations}
  \item 
    Joint conditional infimum and supremum:\\
    for any correspondence~$\correspondence$ on~$\DEPART\times\ARRIVEE$, 
    and for any functions $ \fonctiondepart : \DEPART \to \barRR $ 
    and $ \fonctiondepartbis : \DEPART \to \barRR $,
    we have that
    \begin{subequations}
      \begin{align}
        \InfCond{\fonctiondepart \UppPlus \fonctiondepartbis}{\correspondence}  
        & \leq 
          \InfCond{\fonctiondepart}{\correspondence} 
          \UppPlus \SupCond{\fonctiondepartbis}{\correspondence} 
          \eqfinv 
        \\
        \SupCond{\fonctiondepart \LowPlus \fonctiondepartbis}{\correspondence} 
        & \geq 
          \SupCond{\fonctiondepart}{\correspondence} 
          \LowPlus \InfCond{\fonctiondepartbis}{\correspondence} 
          \eqfinp 
          \label{eq:correspondence_conditional_infimum_properties_sum_geq}
      \end{align}
    \end{subequations}
  \end{enumerate}
\end{proposition}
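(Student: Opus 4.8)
The whole proposition reduces to the single working identity of Definition~\ref{de:conditional_infimum}, namely that $\InfCond{\fonctiondepart}{\correspondence}\np{\arrivee} = \inf_{\depart \in \correspondence\arrivee} \fonctiondepart\np{\depart}$ is an infimum over the foreset $\correspondence\arrivee$, together with the conventions~\eqref{eq:correspondence_convention}. My plan is to recast each item as an elementary fact about infima of $\fonctiondepart$ over subsets of $\DEPART$, and to harvest every supremum statement for free from the duality~\eqref{eq:correspondence_conditional_supremum_infimum}. I would start with item~1, the strict-epigraph formula, because it is the most structural and records the definition geometrically: unwinding the composition of correspondences, a pair $\np{\arrivee,t}$ lies in $\Converse{\correspondence}\bp{\StrictEpigraph\fonctiondepart}$ iff there is some $\depart$ with $\depart\correspondence\arrivee$ and $\fonctiondepart\np{\depart} < t$, that is, iff $\inf_{\depart \in \correspondence\arrivee} \fonctiondepart\np{\depart} < t$. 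The last equivalence is the standard characterization ``$\inf_S \fonctiondepart < t \iff \exists \depart \in S,\ \fonctiondepart\np{\depart} < t$'', which stays valid under $\inf_{\emptyset} \fonctiondepart = +\infty$, and yields exactly $\StrictEpigraph\InfCond{\fonctiondepart}{\correspondence}$.

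Items 2--5 are then pointwise verifications on the foreset $\correspondence\arrivee$. Item~2 uses commutation of iterated infima for~\eqref{eq:correspondence_conditional_infimum_properties_wedge}, the weak max--min inequality for the $\Bvee$ line, and the interaction of the Moreau addition $\UppPlus$ with $\inf$ for the constant-shift and superadditivity lines. Item~3 rests on the fact that $S \mapsto \inf_S \fonctiondepart$ is nonincreasing, applied to the inclusions $\correspondence\arrivee \subseteq \correspondence\ARRIVEE \subseteq \DEPART$, together with $\varphi\bp{\inf_S \fonctiondepart} \leq \inf_S \bp{\varphi \circ \fonctiondepart}$ for nondecreasing $\varphi$. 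Item~4 uses $\np{\correspondence \cup \correspondencebis}\arrivee = \correspondence\arrivee \cup \correspondencebis\arrivee$ and $\np{\correspondence \cap \correspondencebis}\arrivee = \correspondence\arrivee \cap \correspondencebis\arrivee$, combined with the behaviour of $\inf$ under union and under set inclusion. Item~5 uses $\correspondence\Arrivee = \bigcup_{\arrivee \in \Arrivee} \correspondence\arrivee$ and the union rule for infima.

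The heart of the proposition is item~6, the tower property, and its engine is the foreset of a composition, read straight off the definition: $\np{\correspondence\correspondencebis}\arriveeter = \bigcup_{\arrivee \in \correspondencebis\arriveeter} \correspondence\arrivee$. With this,
\[ \bInfCond{ \nInfCond{\fonctiondepart}{\correspondence} }{\correspondencebis}\np{\arriveeter} = \inf_{\arrivee \in \correspondencebis\arriveeter} \inf_{\depart \in \correspondence\arrivee} \fonctiondepart\np{\depart} = \inf_{\depart \in \np{\correspondence\correspondencebis}\arriveeter} \fonctiondepart\np{\depart} = \bInfCond{\fonctiondepart}{\correspondence\correspondencebis}\np{\arriveeter} \eqfinv \]
the middle step being again the union rule for infima. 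The only point requiring attention is the empty-foreset bookkeeping: if $\correspondencebis\arriveeter = \emptyset$ both outer expressions equal $+\infty$ and the union is empty, consistent with~\eqref{eq:correspondence_convention}. Note that item~5 is precisely the special case of item~6 in which $\correspondencebis$ is the correspondence from $\ARRIVEE$ to a one-point set with foreset $\Arrivee$.

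Items 7 and 8 I would deduce rather than recompute. Combining the identity $\fonctionarrivee \circ \theta = \InfCond{\fonctionarrivee}{\graph_{\theta}^{-1}}$ of~\eqref{eq:mapping_composition_and_ConditionalInfimum} with the tower property gives~\eqref{eq:correspondence_conditional_infimum_right_composition_correspondence} at once, while~\eqref{eq:correspondence_conditional_infimum_correspondence_right_composition} follows from the direct foreset computation $\np{\correspondence\graph_{\theta}^{-1}}\arriveeter = \correspondence\theta\np{\arriveeter}$. For item~8, the inequality $\InfCond{\fonctiondepart \UppPlus \fonctiondepartbis}{\correspondence} \leq \InfCond{\fonctiondepart}{\correspondence} \UppPlus \SupCond{\fonctiondepartbis}{\correspondence}$ follows pointwise by bounding $\fonctiondepartbis\np{\depart} \leq \SupCond{\fonctiondepartbis}{\correspondence}\np{\arrivee}$ on $\correspondence\arrivee$, using the monotonicity of $\UppPlus$ and the constant-shift rule from item~2; the companion $\LowPlus$-inequality~\eqref{eq:correspondence_conditional_infimum_properties_sum_geq} then comes from applying~\eqref{eq:correspondence_conditional_supremum_infimum} to $-\fonctiondepart$ and $-\fonctiondepartbis$. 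The only real friction I anticipate is uniform care with the extended-real conventions: the empty-foreset values, and the asymmetry of the Moreau additions $\LowPlus$ and $\UppPlus$ on the indeterminate pairs $\np{+\infty,-\infty}$, which must be checked not to break the inequalities in items 2 and 8.
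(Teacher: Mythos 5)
Your proposal is correct and takes essentially the same approach as the paper: the strict-epigraph formula~\eqref{eq:correspondence_conditional_infimum_strict_epigraph} by unwinding the definitions into the equivalence ``$\inf$ over the foreset is $<t$ iff some point of the foreset has value $<t$'', the tower property~\eqref{eq:tower_property} by the iterated-infimum computation over the foreset of the composed correspondence, item~7 from~\eqref{eq:mapping_composition_and_ConditionalInfimum} combined with the tower property, and the remaining items as elementary pointwise verifications over foresets (which the paper explicitly leaves to the reader). Your two small variations --- proving~\eqref{eq:correspondence_conditional_infimum_correspondence_right_composition} by the direct foreset identity rather than by a second application of the tower property, and noting that the pushforward property~\eqref{eq:correspondence_conditional_infimum_properties_pushforward} is the one-point-target special case of~\eqref{eq:tower_property} --- are consistent with, and slightly streamline, the paper's argument.
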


\begin{proof} 
  Most of the claims are straightforward 
  consequences of the Definition~\ref{de:conditional_infimum} of the
  conditional infimum, and are left to the reader.

  \noindent $\bullet$
  We prove~\eqref{eq:correspondence_conditional_infimum_strict_epigraph}:
  \begin{align*}
    \np{\arrivee,t} \in \StrictEpigraph\InfCond{\fonctiondepart}{\correspondence}
    & \iff
\InfCond{\fonctiondepart}{\correspondence}\np{\arrivee} < t      
\tag{by definition~\eqref{eq:StrictEpigraph} of the strict epigraph}
\\
& \iff      
\exists \depart \in \correspondence\arrivee 
\eqsepv \fonctiondepart\np{\depart} < t 
\tag{by definition~\eqref{eq:correspondence_conditional_infimum} of the
     conditional infimum \( \InfCond{\fonctiondepart}{\correspondence} \)}
\\
& \iff
\exists \depart \in \DEPART \eqsepv
\arrivee \Converse{\correspondence} \depart 
\text{ and } \np{\depart,t} \in \StrictEpigraph\fonctiondepart
      \tag{by definition~\eqref{eq:StrictEpigraph} of the strict epigraph}
    \\
& \iff
\exists \depart \in \DEPART \eqsepv
\arrivee \Converse{\correspondence} \depart 
\text{ and } \depart ~\bp{{\StrictEpigraph\fonctiondepart}}~ t 
\\
& \iff
\np{\arrivee,t} \in \Converse{\correspondence} \bp{\StrictEpigraph\fonctiondepart}
     \eqfinp
     \tag{by definition of the composition of correspondences}
  \end{align*}

  \noindent $\bullet$
  We prove~\eqref{eq:correspondence_conditional_infimum_right_composition_correspondence}
  as follows.
  For any correspondence~$\correspondence$ on~$\DEPART\times\ARRIVEE$, 
  any function $ \fonctiondepartbis : \DEPARTbis \to \barRR $,
  any mapping \( \theta : \DEPART \to \DEPARTbis \)
  and any \( \arrivee\in\ARRIVEE \), we have that 
  \begin{align*}
    \nInfCond{\fonctiondepartbis \circ \theta}{\correspondence}
    &=
      \bInfCond{\nInfCond{\fonctiondepartbis}{\graph_{\theta}^{-1}}}{\correspondence}
      \tag{as $\fonctiondepartbis \circ \theta=\InfCond{\fonctiondepartbis}{\graph_{\theta}^{-1}}$
      by~\eqref{eq:mapping_composition_and_ConditionalInfimum}}
    \\
    &=\bInfCond{\fonctiondepartbis}{  \graph_{ \theta }^{-1}\correspondence }\eqfinp
      \tag{by the tower property~\eqref{eq:tower_property}}
  \end{align*}

  \noindent $\bullet$ We prove~\eqref{eq:correspondence_conditional_infimum_correspondence_right_composition} as follows.
  For any correspondence~$\correspondence$ on~$\DEPART\times\ARRIVEE$, 
  any function $ \fonctiondepart : \DEPART \to \barRR $,
  any mapping \( \theta : \ARRIVEEbis \to \ARRIVEE \)
  and any \( \arriveebis \in \ARRIVEEbis \), we have that 
  \begin{align*}
    { \nInfCond{\fonctiondepart}{\correspondence}  \circ \theta }
    &=
      \bInfCond{\nInfCond{\fonctiondepart}{\correspondence}}{{\graph_{\theta}^{-1}}}
      \tag{by~\eqref{eq:mapping_composition_and_ConditionalInfimum}}
    \\
    &=
      \InfCond{\fonctiondepart}{ \correspondence \graph_{ \theta }^{-1} }
      \tag{by the tower property~\eqref{eq:tower_property}}
      \eqfinp 
  \end{align*}

  \noindent $\bullet$ We prove~\eqref{eq:tower_property} as follows.
  For any pair of correspondences $\correspondence$ on~$\DEPART\times\ARRIVEE$
  and $\correspondencebis$ on~$\ARRIVEE\times\ARRIVEEter$,
  any function $ \fonctiondepart : \DEPART \to \barRR $
  and any \( \arriveebis \in \ARRIVEEbis \), we have that 
  \begin{align*}
    \bInfCond{ \nInfCond{\fonctiondepart}{\correspondencebis} }{\correspondence}\np{\arriveebis}
    &=
      \inf_{ \arrivee \in \correspondence\arriveebis }
      \InfCond{\fonctiondepart}{\correspondencebis}\np{\arrivee} 
      \tag{by definition~\eqref{eq:correspondence_conditional_infimum} of the 
      conditional infimum}
    \\
    &=
      \inf_{ \arrivee \in \correspondence\arriveebis }
      \inf_{ \depart \in \correspondencebis\arrivee }
      \fonctiondepart\np{\depart}
      \tag{by definition~\eqref{eq:correspondence_conditional_infimum} of the 
      conditional infimum}
    \\
    &=
      \inf_{ \depart \in \correspondencebis\arrivee, \arrivee \in \correspondence\arriveebis }
      \fonctiondepart\np{\depart}
    \\
    &=
      \inf_{ \depart \in \correspondencebis\correspondence\arriveebis }
      \fonctiondepart\np{\depart}
      \tag{by definition of the composition of two correspondences}
    \\
    &=
      \bInfCond{\fonctiondepart}{\correspondence\correspondencebis} 
      \np{\arriveebis}
      \tag{by definition~\eqref{eq:correspondence_conditional_infimum} of the 
      conditional infimum}
      \eqfinp
  \end{align*}
  \medskip

  This ends the proof.
\end{proof}

\section{Applications of the conditional infimum to minimization problems}
\label{Applications_of_the_conditional_infimum_to_minimization_problems}

With the conditional infimum, we now establish equalities 
and inequalities between two minimization problems,
an original problem on the set~$\UNCERTAIN$
and another one on the set~$\PRIMAL$, where the sets
$\UNCERTAIN$ and $\PRIMAL$ are possibly different
(in particular, $\PRIMAL$ might be a vector space, whereas $\UNCERTAIN$ is not). 
As we deal with optimization problems, we will often resort to the more telling
usage \( \inf_{\uncertain \in \Uncertain} \fonctionuncertain\np{\uncertain} \)
or \( \min_{\uncertain \in \Uncertain} \fonctionuncertain\np{\uncertain} \),
rather than \( \InfCond{\fonctionuncertain}{\Uncertain} \) as in~\eqref{eq:subset_conditional_infimum}.   

\begin{proposition}
  We consider two sets $\UNCERTAIN$ and $\PRIMAL$, 
  a correspondence~$\correspondence$ on~$\UNCERTAIN\times\PRIMAL$, 
  and a function \( \fonctionuncertain : \UNCERTAIN \to \barRR \).
  For any subset \( \Primal \subset \PRIMAL \), 
  we have the equality
  \begin{equation}
    \inf_{\uncertain \in \correspondence\Primal} \fonctionuncertain\np{\uncertain} 
    =
    \inf_{\primal \in \Primal}
    \bp{\nInfCond{\fonctionuncertain}{\correspondence}\np{\primal}}
    \eqfinp
    \label{eq:ConditionalInfimum_tower_property_equality_subset}
  \end{equation}  
  For any subset \( \Uncertain \subset \UNCERTAIN \), we have the implications
  \begin{subequations}
    \begin{align}
   \Uncertain \subset \domain\correspondence
& \implies
   \inf_{\uncertain \in \Uncertain} \fonctionuncertain\np{\uncertain} 
    \geq
    \inf_{\primal \in \Uncertain\correspondence} 
    \bp{\nInfCond{\fonctionuncertain}{\correspondence}\np{\primal}}
    \eqfinv
    \label{eq:ConditionalInfimum_tower_property_ineq}
\\
\correspondence\correspondence^{-1}\Uncertain \subset 
      \Uncertain \subset \domain\correspondence
   & \implies
      \inf_{\uncertain \in \Uncertain} \fonctionuncertain\np{\uncertain} 
      =
      \inf_{\primal \in \Uncertain\correspondence} 
      \bp{\nInfCond{\fonctionuncertain}{\correspondence}\np{\primal}}
      \eqfinp
      \label{eq:ConditionalInfimum_tower_property}
    \end{align}
\end{subequations}

  %
          %
  \label{pr:inf_ConditionalInfimum_inf_original}
\end{proposition}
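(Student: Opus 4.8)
The plan is to reduce everything to the pushforward property~\eqref{eq:correspondence_conditional_infimum_properties_pushforward}, together with one elementary set-theoretic identity and the (order-reversing) monotonicity of the infimum with respect to the set over which it is taken.

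First I would observe that the equality~\eqref{eq:ConditionalInfimum_tower_property_equality_subset} is nothing but the pushforward property~\eqref{eq:correspondence_conditional_infimum_properties_pushforward}, read with source set~$\UNCERTAIN$, target set~$\PRIMAL$, function~$\fonctionuncertain$ and subset~$\Primal \subset \PRIMAL$. Indeed, unfolding the notation~\eqref{eq:subset_conditional_infimum}, the left hand side of~\eqref{eq:correspondence_conditional_infimum_properties_pushforward} reads $\inf_{\primal \in \Primal} \nInfCond{\fonctionuncertain}{\correspondence}\np{\primal}$ and its right hand side reads $\inf_{\uncertain \in \correspondence\Primal} \fonctionuncertain\np{\uncertain}$, which is precisely~\eqref{eq:ConditionalInfimum_tower_property_equality_subset}. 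So this first equality requires no separate argument, and in particular all the empty-set bookkeeping of~\eqref{eq:correspondence_convention} (for instance when some foreset $\correspondence\primal$ is empty, so that $\nInfCond{\fonctionuncertain}{\correspondence}\np{\primal}=+\infty$) is already absorbed into the pushforward property.

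Next, for the two implications, I would apply~\eqref{eq:ConditionalInfimum_tower_property_equality_subset} with the particular choice $\Primal = \Uncertain\correspondence$ (the afterset of~$\Uncertain$), yielding
\begin{equation*}
  \inf_{\primal \in \Uncertain\correspondence}
  \bp{\nInfCond{\fonctionuncertain}{\correspondence}\np{\primal}}
  =
  \inf_{\uncertain \in \correspondence\np{\Uncertain\correspondence}} \fonctionuncertain\np{\uncertain}
  \eqfinp
\end{equation*}
The key combinatorial step is then the set identity $\correspondence\np{\Uncertain\correspondence} = \correspondence\correspondence^{-1}\Uncertain$, which I would check by unfolding the definitions of afterset, foreset and composition: both sides equal $\bset{\uncertain \in \UNCERTAIN}{\exists \uncertain' \in \Uncertain,\ \exists \primal \in \PRIMAL,\ \uncertain\correspondence\primal \text{ and } \uncertain'\correspondence\primal}$ (equivalently, one notes $\correspondence^{-1}\Uncertain = \Uncertain\correspondence$ and uses associativity). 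Hence the right hand side of both implications equals $\inf_{\uncertain \in \correspondence\correspondence^{-1}\Uncertain} \fonctionuncertain\np{\uncertain}$, and the whole problem reduces to comparing this infimum with $\inf_{\uncertain \in \Uncertain} \fonctionuncertain\np{\uncertain}$.

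Finally, I would compare the two index sets~$\Uncertain$ and~$\correspondence\correspondence^{-1}\Uncertain$. The hypothesis $\Uncertain \subset \domain\correspondence$ gives the inclusion $\Uncertain \subset \correspondence\correspondence^{-1}\Uncertain$: for $\uncertain \in \Uncertain$ there exists $\primal$ with $\uncertain\correspondence\primal$ (since $\uncertain \in \domain\correspondence$), and taking the witness $\uncertain' = \uncertain \in \Uncertain$ exhibits $\uncertain$ in $\correspondence\correspondence^{-1}\Uncertain$. Since the infimum is nonincreasing with respect to the set over which it ranges, this inclusion yields $\inf_{\uncertain \in \Uncertain} \fonctionuncertain\np{\uncertain} \geq \inf_{\uncertain \in \correspondence\correspondence^{-1}\Uncertain} \fonctionuncertain\np{\uncertain}$, which is~\eqref{eq:ConditionalInfimum_tower_property_ineq}. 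Under the additional hypothesis $\correspondence\correspondence^{-1}\Uncertain \subset \Uncertain$, the two sets coincide, so this inequality becomes the equality~\eqref{eq:ConditionalInfimum_tower_property}. The only genuinely delicate point is getting the set identity and the inclusion direction right; the rest is routine, the analytic subtleties having been delegated to the pushforward property.
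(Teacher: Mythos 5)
Your proposal is correct and follows essentially the same route as the paper's proof: reduce the equality to the pushforward property, show $\Uncertain \subset \correspondence\correspondence^{-1}\Uncertain$ from $\Uncertain \subset \domain\correspondence$ via the witness $\uncertain' = \uncertain$, use monotonicity of the infimum, and upgrade to equality under the reverse inclusion. The only (welcome) difference is that you make explicit the identification $\Uncertain\correspondence = \correspondence^{-1}\Uncertain$ and the identity $\correspondence\np{\Uncertain\correspondence} = \correspondence\correspondence^{-1}\Uncertain$, which the paper uses tacitly when it instantiates $\Primal=\correspondence^{-1}\Uncertain$ while the statement is phrased with $\Uncertain\correspondence$.
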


\begin{proof}
  The equality~\eqref{eq:ConditionalInfimum_tower_property_equality_subset}
  is proved as follows: 
  \begin{align*}
    \inf_{\uncertain \in \correspondence\Primal} \fonctionuncertain\np{\uncertain} 
    &=
      \InfCond{\fonctionuncertain}{\correspondence\Primal}
     \tag{by definition~\eqref{eq:subset_conditional_infimum}}
    \\
    &=
\bInfCond{ \nInfCond{\fonctionuncertain}{\correspondence} }{\Primal}
\tag{by the pushforward property~\eqref{eq:correspondence_conditional_infimum_properties_pushforward}}
      \\
    &=
    \inf_{\primal \in \Primal}
    \bp{\InfCond{\fonctionuncertain}{\correspondence}\np{\primal}}
      \eqfinp
           \tag{by definition~\eqref{eq:subset_conditional_infimum}}
  \end{align*}

We suppose that \( \Uncertain \subset \domain\correspondence\) 
and we prove the right hand side inequality
in~\eqref{eq:ConditionalInfimum_tower_property_ineq}.
  First, we prove that   $\Uncertain \subset
  \correspondence\correspondence^{-1}\Uncertain$. 
Indeed, if $\uncertain \in \Uncertain$ we have that
  $\uncertain \in \domain\correspondence$ as
\( \Uncertain \subset \domain\correspondence\).
Therefore, there exists $\primal \in \PRIMAL $ such that 
$\uncertain \correspondence \primal$ or, equivalently, 
that $\primal \correspondence^{-1} \uncertain$.
Now, $\uncertain \correspondence \primal$ 
and $\primal \correspondence^{-1} \uncertain$
imply that  $\uncertain  \correspondence \correspondence^{-1} \uncertain$ and thus $\uncertain
  \in \correspondence \correspondence^{-1} \Uncertain$. Second, we obtain that
  \begin{align*}
    \inf_{\uncertain \in \Uncertain} \fonctionuncertain\np{\uncertain}
    &\ge 
      \inf_{\uncertain \in \correspondence \correspondence^{-1} \Uncertain} \fonctionuncertain\np{\uncertain}
      \tag{since \( \Uncertain \subset \correspondence\correspondence^{-1}\Uncertain \)}
    \\
    &=
    \inf_{\primal \in\correspondence^{-1}\Uncertain}
      \bp{\InfCond{\fonctionuncertain}{\correspondence}\np{\primal}}
      \eqfinp
      \tag{by Equation~\eqref{eq:ConditionalInfimum_tower_property_equality_subset} with $\Primal=\correspondence^{-1}\Uncertain$}
  \end{align*}

When \( \correspondence\correspondence^{-1}\Uncertain \subset 
      \Uncertain \subset \domain\correspondence \), 
the right hand side equality
in~\eqref{eq:ConditionalInfimum_tower_property} comes from the fact that the
inequality above is an equality using that \( \correspondence\correspondence^{-1}\Uncertain \subset 
      \Uncertain \).
  
  \medskip

  This ends the proof. 
\end{proof}

With the conditional infimum, we now state sufficient conditions to relate the 
optimal solutions of two minimization problems. 

\begin{proposition}
  We consider a function \( \fonctionuncertain : \UNCERTAIN \to \barRR \), 
  a subset \( \Uncertain \subset \UNCERTAIN \) 
  and the minimization problem 
  \begin{equation}
    \min_{ \uncertain\in\Uncertain } \fonctionuncertain\np{\uncertain} 
    \eqfinp  
    \label{eq:minimization_problem_Uncertain}
  \end{equation}
  Assume that there exists 
  \begin{subequations}
    \begin{enumerate}
    \item 
      a set~$\PRIMAL$,
      a correspondence~$\correspondence$ on~$\UNCERTAIN\times\PRIMAL$, 
      and a function 
      \( \fonctionprimal : \PRIMAL \to \barRR \) such that 
      \begin{equation}
        \fonctionprimal\np{\primal} \leq 
        \InfCond{\fonctionuncertain}{\correspondence}\np{\primal}
        \eqsepv \forall \primal \in \PRIMAL
        \eqfinv
        \label{eq:argmin_ConditionalInfimum_argmin_original_fonctionprimal}
      \end{equation}
      a subset 
      \( \Primal  \subset \PRIMAL \) such that 
      \begin{equation}
        \Uncertain \subset \correspondence \Primal  
        \eqfinv
        \label{eq:argmin_ConditionalInfimum_argmin_original_subsets}
      \end{equation}
      and an optimal solution~\( \primal\opt \in \PRIMAL \) to the 
      auxiliary minimization problem
      \( \min_{ \primal \in \Primal } \fonctionprimal\np{\primal} \),
      that is, 
      \begin{equation}
        \primal\opt
        \in 
        \argmin_{ {\primal \in \Primal} } \fonctionprimal\np{\primal}
        \eqfinv
        \label{eq:argmin_ConditionalInfimum_argmin_original_primal}
      \end{equation}
    \item 
      an element \( \uncertain\opt \in \UNCERTAIN \) such that 
      \begin{align}
        \fonctionuncertain\np{\uncertain\opt}
        &=
          \fonctionprimal\np{\primal\opt}
          \eqfinv
          \label{eq:argmin_ConditionalInfimum_argmin_original_=}
        \\
        \uncertain\opt
        &\in 
          \Uncertain
          \eqfinp
          \label{eq:argmin_ConditionalInfimum_argmin_original_in_Uncertain=}
      \end{align}
    \end{enumerate}
  \end{subequations}
  Then, $\uncertain\opt$ is an optimal solution to the original
  minimization problem~\eqref{eq:minimization_problem_Uncertain}, 
  that is, 
  \begin{equation}
    \uncertain\opt \in 
    \argmin_{ \uncertain \in\Uncertain } \fonctionuncertain\np{\uncertain}
    \eqfinp
    \label{eq:argmin_ConditionalInfimum=argmin_original}
  \end{equation}
  \label{pr:argmin_ConditionalInfimum_argmin_original}
\end{proposition}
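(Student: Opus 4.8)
The plan is to bracket the optimal value of the original problem~\eqref{eq:minimization_problem_Uncertain} between two quantities that both coincide with $\fonctionuncertain\np{\uncertain\opt}$, thereby forcing equality and exhibiting $\uncertain\opt$ as a minimizer. The whole argument is a sandwiching computation resting on the equality~\eqref{eq:ConditionalInfimum_tower_property_equality_subset} of Proposition~\ref{pr:inf_ConditionalInfimum_inf_original}.

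First I would exploit the feasibility inclusion~\eqref{eq:argmin_ConditionalInfimum_argmin_original_subsets}, namely $\Uncertain \subset \correspondence\Primal$, together with the fact that an infimum over a larger set is no larger, to obtain $\inf_{\uncertain \in \correspondence\Primal} \fonctionuncertain\np{\uncertain} \leq \inf_{\uncertain \in \Uncertain} \fonctionuncertain\np{\uncertain}$. Next I would rewrite the left-hand infimum using the key identity~\eqref{eq:ConditionalInfimum_tower_property_equality_subset}, which reads $\inf_{\uncertain \in \correspondence\Primal} \fonctionuncertain\np{\uncertain} = \inf_{\primal \in \Primal} \bp{\nInfCond{\fonctionuncertain}{\correspondence}\np{\primal}}$. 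The pointwise minorization~\eqref{eq:argmin_ConditionalInfimum_argmin_original_fonctionprimal}, $\fonctionprimal \leq \InfCond{\fonctionuncertain}{\correspondence}$, then yields $\inf_{\primal \in \Primal} \fonctionprimal\np{\primal} \leq \inf_{\primal \in \Primal} \bp{\nInfCond{\fonctionuncertain}{\correspondence}\np{\primal}}$.

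Chaining these three steps, and using that $\primal\opt$ attains the infimum of $\fonctionprimal$ over $\Primal$ by~\eqref{eq:argmin_ConditionalInfimum_argmin_original_primal} followed by the value-matching condition~\eqref{eq:argmin_ConditionalInfimum_argmin_original_=}, I arrive at
\begin{equation*}
  \fonctionuncertain\np{\uncertain\opt}
  = \fonctionprimal\np{\primal\opt}
  = \inf_{\primal \in \Primal} \fonctionprimal\np{\primal}
  \leq \inf_{\uncertain \in \Uncertain} \fonctionuncertain\np{\uncertain}
  \eqfinp
\end{equation*}
On the other hand, since $\uncertain\opt \in \Uncertain$ by~\eqref{eq:argmin_ConditionalInfimum_argmin_original_in_Uncertain=}, the element $\uncertain\opt$ is feasible, so $\fonctionuncertain\np{\uncertain\opt} \geq \inf_{\uncertain \in \Uncertain} \fonctionuncertain\np{\uncertain}$. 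The two inequalities combine into an equality, whence $\fonctionuncertain\np{\uncertain\opt} = \min_{\uncertain \in \Uncertain} \fonctionuncertain\np{\uncertain}$, which is exactly~\eqref{eq:argmin_ConditionalInfimum=argmin_original}.

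I do not expect a genuine obstacle here, as the reasoning is a pure bracketing argument. The only points that require care are keeping the direction of the infimum-monotonicity inequalities correct (enlarging the domain decreases the infimum) and checking that the chain involves no cancellation of $+\infty$ with $-\infty$ in~$\barRR$; since every step is a plain comparison of values, the Moreau lower and upper additions play no role and the argument goes through verbatim.
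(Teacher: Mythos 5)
Your proof is correct and follows essentially the same route as the paper's own: the same chain of (in)equalities built from the value-matching condition, the optimality of $\primal\opt$, the minorization $\fonctionprimal \leq \InfCond{\fonctionuncertain}{\correspondence}$, the key equality~\eqref{eq:ConditionalInfimum_tower_property_equality_subset}, and the inclusion $\Uncertain \subset \correspondence\Primal$. The only cosmetic difference is that you make the final feasibility step ($\uncertain\opt \in \Uncertain$ gives the reverse inequality) fully explicit, whereas the paper leaves it as a one-line closing remark.
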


\begin{proof}
  The equality~\eqref{eq:argmin_ConditionalInfimum=argmin_original}
  between solutions of minimization problems follows from
  \begin{align*}
    \fonctionuncertain\np{\uncertain\opt}
    &=
      \fonctionprimal\np{\primal\opt}
      \tag{by assumption~\eqref{eq:argmin_ConditionalInfimum_argmin_original_=}}
    \\
    &=
      \min_{ \primal \in \Primal } \fonctionprimal\np{\primal}
      \tag{by assumption~\eqref{eq:argmin_ConditionalInfimum_argmin_original_primal}}
    \\
    &\leq
      \inf_{ \primal \in \Primal } \InfCond{\fonctionuncertain}{\correspondence}\np{\primal}
      \tag{because \( \fonctionprimal \leq 
      \InfCond{\fonctionuncertain}{\correspondence}\)
      by assumption~\eqref{eq:argmin_ConditionalInfimum_argmin_original_fonctionprimal}}
    \\
    &=
      \inf_{ \uncertain \in \correspondence\Primal } \fonctionuncertain\np{\uncertain}
      \tag{by the equality~\eqref{eq:ConditionalInfimum_tower_property_equality_subset}}
    \\
    & \leq 
      \inf_{ \uncertain \in \Uncertain} \fonctionuncertain\np{\uncertain}
      \eqfinp
      \tag{because \( \correspondence \Primal \supset \Uncertain \)
      by assumption~\eqref{eq:argmin_ConditionalInfimum_argmin_original_subsets}}
  \end{align*}
  As \( \uncertain\opt \in \Uncertain \)
  by assumption~\eqref{eq:argmin_ConditionalInfimum_argmin_original_in_Uncertain=},
  this ends the proof.
\end{proof}

\section{Detecting hidden convexity using the conditional infimum}
\label{Detecting_hidden_convexity_in_optimization_problems_using_conditional_infimum}

In~\S\ref{A_sufficient_condition_for_hidden_convexity_in_optimization_problems},
we provide a sufficient condition for hidden convexity in minimization
problems.
Then, in~\S\ref{Hidden_convexity_in_quadratic_optimization_problems},
we show how our result applies to quadratic optimization problems.

\subsection{A sufficient condition for hidden convexity in minimization
  problems}
\label{A_sufficient_condition_for_hidden_convexity_in_optimization_problems}

We propose a formal definition of ``hidden convexity'' in minimization
problems, using the notation~\eqref{eq:subset_conditional_infimum}--\eqref{eq:subset_conditional_infimum_argmin}. 

\begin{definition}
  \begin{subequations}
We consider a set~$\UNCERTAIN$, 
a function \( \fonctionuncertain : \UNCERTAIN \to \barRR \)
and a subset \( \Uncertain \subset \UNCERTAIN \).
We say that the minimization problem
\( \inf_{\uncertain \in \Uncertain} \fonctionuncertain\np{\uncertain}
= \InfCond{\fonctionuncertain}{\Uncertain} \)
displays \emph{hidden convexity}
if there exists a vector space~$\PRIMAL$, 
a convex function \( \fonctionprimal: \PRIMAL \to \barRR \) 
and a convex subset \( \Convex \subset \PRIMAL \) such that 
\begin{equation}
  \inf_{\uncertain \in \Uncertain} \fonctionuncertain\np{\uncertain}
=    \InfCond{\fonctionuncertain}{\Uncertain}
= \InfCond{\fonctiondepart}{\Convex}
= \inf_{\primal\in\Convex}\fonctiondepart\np{\primal}
\eqfinp
    \label{eq:hidden_convexity}
  \end{equation}
  Moreover, the minimization problem
\( \min_{\uncertain \in \Uncertain} \fonctionuncertain\np{\uncertain} \) 
  is said to display
  \emph{strong hidden convexity} if, in addition to~\eqref{eq:hidden_convexity},
  $\argmin \nsetc{\fonctiondepart}{\Convex}\not=\emptyset$
  and there exists a set-valued mapping
  $\gamma: \Convex \rightrightarrows \Uncertain$ such that
  \begin{equation}
    \gamma\bp{\argmin \nsetc{\fonctiondepart}{\Convex}} \subset \argmin \nsetc{\fonctionuncertain}{\Uncertain}
      \eqfinp
    \end{equation}
  \end{subequations}
    \label{de:hidden_convexity}
\end{definition}

We state a sufficient condition for hidden convexity in minimization
problems, using the conditional infimum.

\begin{proposition}
  Let $\PRIMAL$ be a vector space,
  $\UNCERTAIN$ be a set and 
  $\correspondence \subset \UNCERTAIN \times \PRIMAL$ 
  be a correspondence between the sets~$\UNCERTAIN$ and~$\PRIMAL$.
  Let \( \fonctionuncertain : \UNCERTAIN \to \barRR \) be a function,
  and \( \Convex \subset \PRIMAL \) be a convex subset such that 
  the function~\( \InfCond{\fonctionuncertain}{\correspondence}:
  \PRIMAL \to \barRR \) is convex on~$\Convex$. 
  Then, the minimization problem
  \( \inf_{\uncertain \in \correspondence\Convex} 
  \fonctionuncertain\np{\uncertain} \) displays hidden convexity
  as in~\eqref{eq:hidden_convexity}, with \( \fonctionprimal=
  \InfCond{\fonctionuncertain}{\correspondence} \):
  \begin{equation}
    \inf_{\uncertain \in \correspondence\Convex} 
    \fonctionuncertain\np{\uncertain} 
    =
    \inf_{\primal \in \Convex} 
    \bp{\InfCond{\fonctionuncertain}{\correspondence}} \np{\primal} 
    \eqfinp
    \label{eq:optimization_hidden_convexity}
  \end{equation}
  Moreover, if there exists an optimal solution~\( \primal\opt \in \PRIMAL \) 
  to the auxiliary convex minimization problem
  \( \min_{\primal \in \Convex} 
  \bp{\InfCond{\fonctionuncertain}{\correspondence}} \np{\primal} \), 
  that is, if 
  \begin{subequations}
    \begin{align}
      \primal\opt 
      &\in 
        \argmin_{\primal \in \Convex} 
        \bp{\InfCond{\fonctionuncertain}{\correspondence}}\np{\primal} 
        \eqfinv
        \label{eq:optimization_hidden_convexity_argmin_ConditionalInfimum_argmin_original_primal}
      \intertext{and if there exists an optimal solution~\( \uncertain\opt \) 
      to the minimization problem 
      \( min_{\uncertain \in \correspondence \primal\opt }
      \fonctionuncertain\np{\uncertain} \) ---  
      which is the original minimization problem but with 
      stronger constraint \( \uncertain \in \correspondence \primal\opt \) 
      instead of \( \uncertain \in \correspondence\Convex \) ---
      that is, if}
      \uncertain\opt
      &\in 
        \argmin_{\uncertain \in \correspondence \primal\opt } \fonctionuncertain\np{\uncertain}
        \label{eq:optimization_hidden_convexity_argmin_ConditionalInfimum_uncertain_opt}
        \eqfinv
        \intertext{then $\uncertain\opt$ is an optimal solution to the original
        minimization problem \( \min_{\uncertain \in \correspondence\Convex} 
        \fonctionuncertain\np{\uncertain} \), that is,}
        \uncertain\opt
      &\in 
        \argmin_{\uncertain \in \correspondence\Convex} 
        \fonctionuncertain\np{\uncertain} 
        \eqfinp 
    \end{align}  
  \end{subequations}
  \label{pr:optimization_hidden_convexity}
\end{proposition}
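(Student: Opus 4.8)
The plan is to get the equality~\eqref{eq:optimization_hidden_convexity} for free from the already-established equality~\eqref{eq:ConditionalInfimum_tower_property_equality_subset}, and then to derive the optimality assertion by checking that the present data satisfy, verbatim, the hypotheses of Proposition~\ref{pr:argmin_ConditionalInfimum_argmin_original}. Concretely, for the first part I would specialize~\eqref{eq:ConditionalInfimum_tower_property_equality_subset} to the subset $\Primal = \Convex \subset \PRIMAL$, which gives directly
\[
\inf_{\uncertain \in \correspondence\Convex} \fonctionuncertain\np{\uncertain} = \inf_{\primal \in \Convex} \bp{\InfCond{\fonctionuncertain}{\correspondence}}\np{\primal} \eqfinp
\]
Taking $\Uncertain = \correspondence\Convex$ and $\fonctionprimal = \InfCond{\fonctionuncertain}{\correspondence}$, the right-hand side is the minimization over the convex set $\Convex$ of a function that is convex on $\Convex$ by hypothesis, which is exactly the structure required in Definition~\ref{de:hidden_convexity}; hence hidden convexity as in~\eqref{eq:hidden_convexity} holds. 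If one insists on a function convex on all of $\PRIMAL$, I would replace $\InfCond{\fonctionuncertain}{\correspondence}$ by its extension that is $+\infty$ outside $\Convex$, which is convex on $\PRIMAL$ and leaves the infimum over $\Convex$ unchanged.

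For the optimality part, I would invoke Proposition~\ref{pr:argmin_ConditionalInfimum_argmin_original} with the distinguished subset taken to be $\Uncertain = \correspondence\Convex$, the correspondence $\correspondence$ unchanged, the auxiliary function $\fonctionprimal = \InfCond{\fonctionuncertain}{\correspondence}$, and the subset $\Primal = \Convex$. Then~\eqref{eq:argmin_ConditionalInfimum_argmin_original_fonctionprimal} holds as an equality, $\Uncertain = \correspondence\Convex \subset \correspondence\Convex$ gives~\eqref{eq:argmin_ConditionalInfimum_argmin_original_subsets}, and assumption~\eqref{eq:optimization_hidden_convexity_argmin_ConditionalInfimum_argmin_original_primal} on $\primal\opt$ is precisely~\eqref{eq:argmin_ConditionalInfimum_argmin_original_primal}. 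It then remains to check the two conditions on $\uncertain\opt$. From~\eqref{eq:optimization_hidden_convexity_argmin_ConditionalInfimum_uncertain_opt} we have $\uncertain\opt \in \correspondence\primal\opt$, and since $\primal\opt \in \Convex$ this yields $\uncertain\opt \in \correspondence\Convex = \Uncertain$, which is~\eqref{eq:argmin_ConditionalInfimum_argmin_original_in_Uncertain=}. Being a minimizer of $\fonctionuncertain$ over the foreset $\correspondence\primal\opt$, the point $\uncertain\opt$ satisfies $\fonctionuncertain\np{\uncertain\opt} = \inf_{\uncertain \in \correspondence\primal\opt} \fonctionuncertain\np{\uncertain}$, which by the definition~\eqref{eq:correspondence_conditional_infimum} of the conditional infimum equals $\InfCond{\fonctionuncertain}{\correspondence}\np{\primal\opt} = \fonctionprimal\np{\primal\opt}$, giving~\eqref{eq:argmin_ConditionalInfimum_argmin_original_=}. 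With all hypotheses verified, Proposition~\ref{pr:argmin_ConditionalInfimum_argmin_original} delivers $\uncertain\opt \in \argmin_{\uncertain \in \correspondence\Convex} \fonctionuncertain\np{\uncertain}$.

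I do not expect a genuine obstacle: the argument is mostly a matter of matching the present objects to the abstract hypotheses. The one step carrying real content is the identification $\fonctionuncertain\np{\uncertain\opt} = \InfCond{\fonctionuncertain}{\correspondence}\np{\primal\opt}$, which rests on the fact that $\InfCond{\fonctionuncertain}{\correspondence}\np{\primal\opt}$ is \emph{by definition} the infimum of $\fonctionuncertain$ over the foreset $\correspondence\primal\opt$, i.e.\ exactly the auxiliary problem of which $\uncertain\opt$ is assumed to be a minimizer. The only point requiring a word of care is the nonemptiness of $\correspondence\primal\opt$, which is guaranteed by the very assumption that the minimizer $\uncertain\opt$ exists.
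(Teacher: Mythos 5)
Your proposal is correct and follows essentially the same route as the paper's proof: the equality~\eqref{eq:optimization_hidden_convexity} is obtained by specializing~\eqref{eq:ConditionalInfimum_tower_property_equality_subset} with \( \Primal=\Convex \), and the argmin statement by verifying the hypotheses of Proposition~\ref{pr:argmin_ConditionalInfimum_argmin_original} with exactly the same identifications \( \fonctionprimal=\InfCond{\fonctionuncertain}{\correspondence} \), \( \Primal=\Convex \), \( \Uncertain=\correspondence\Convex \), including the key step \( \fonctionuncertain\np{\uncertain\opt}=\InfCond{\fonctionuncertain}{\correspondence}\np{\primal\opt} \) via the definition of the conditional infimum. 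Your added remarks (extending the function by \( +\infty \) outside \( \Convex \), and nonemptiness of \( \correspondence\primal\opt \)) are harmless refinements not needed in the paper's argument.
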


\begin{proof}
  The equality~\eqref{eq:optimization_hidden_convexity}
  is a straightforward application of
  the equality~\eqref{eq:ConditionalInfimum_tower_property_equality_subset}
  with \( \Primal=\Convex \), in Proposition~\ref{pr:inf_ConditionalInfimum_inf_original}.

  The second part regarding the $\argmin$ is an application of
  Proposition~\ref{pr:argmin_ConditionalInfimum_argmin_original}
  whose assumptions 
  \eqref{eq:argmin_ConditionalInfimum_argmin_original_fonctionprimal}, 
  \eqref{eq:argmin_ConditionalInfimum_argmin_original_subsets}, 
  \eqref{eq:argmin_ConditionalInfimum_argmin_original_primal}, 
  \eqref{eq:argmin_ConditionalInfimum_argmin_original_=}, 
  \eqref{eq:argmin_ConditionalInfimum_argmin_original_in_Uncertain=}
  are satisfied as follows. 

  Equation~\eqref{eq:argmin_ConditionalInfimum_argmin_original_fonctionprimal}
  is satisfied by taking the function \( \fonctionprimal =
  \InfCond{\fonctionuncertain}{\correspondence} \).
  Equation~\eqref{eq:argmin_ConditionalInfimum_argmin_original_subsets}
  is satisfied by taking the subsets \( \Primal=\Convex \) and 
  \( \Uncertain=\correspondence\Convex \).
  Equation~\eqref{eq:argmin_ConditionalInfimum_argmin_original_primal}
  is exactly
  Equation~\eqref{eq:optimization_hidden_convexity_argmin_ConditionalInfimum_argmin_original_primal}
  in the assumptions as \( \fonctionprimal =
  \InfCond{\fonctionuncertain}{\correspondence} \)
  and \( \Primal=\Convex \).
  Equation~\eqref{eq:argmin_ConditionalInfimum_argmin_original_=}
  holds true because 
  \begin{align*}
    \fonctionuncertain\np{\uncertain\opt}
    &=
      \min_{\uncertain \in \correspondence \primal\opt }
      \fonctionuncertain\np{\uncertain}
      \tag{by the
      assumption~\eqref{eq:optimization_hidden_convexity_argmin_ConditionalInfimum_uncertain_opt}}
    \\
    &=
      \InfCond{\fonctionuncertain}{\correspondence}\np{\primal\opt}
      \tag{by definition~\eqref{eq:correspondence_conditional_infimum} of the 
      conditional infimum~\( \InfCond{\fonctionuncertain}{\correspondence} \)}
    \\
    &=
      \fonctionprimal\np{\primal\opt}
      \eqfinp
      \tag{because \( \fonctionprimal =
      \InfCond{\fonctionuncertain}{\correspondence} \)}
  \end{align*}
  Equation~\eqref{eq:argmin_ConditionalInfimum_argmin_original_in_Uncertain=}
  is satisfied because 
  \( \uncertain\opt \in \correspondence \primal\opt \)
  by~\eqref{eq:optimization_hidden_convexity_argmin_ConditionalInfimum_uncertain_opt},
  where \( \primal\opt \in \Convex \)
  by~\eqref{eq:optimization_hidden_convexity_argmin_ConditionalInfimum_argmin_original_primal},
  so that \( \uncertain\opt \in \correspondence\Convex=\Uncertain \).
\end{proof}

In the next~\S\ref{Hidden_convexity_in_quadratic_optimization_problems},
we will use the following version of
Proposition~\ref{pr:optimization_hidden_convexity}
where the correspondence~$\correspondence$ is induced by a mapping.

\begin{corollary}
  Let $\PRIMAL$ be a vector space,
  $\UNCERTAIN$ be a set and 
\( \theta: \UNCERTAIN \to \PRIMAL \) be a mapping.
  Let \( \fonctionuncertain : \UNCERTAIN \to \barRR \) be a function,
  and \( \Convex \subset \PRIMAL \) be a convex subset such that 
  the function~\( \InfCond{\fonctionuncertain}{\theta}:
  \PRIMAL \to \barRR \) is convex on~$\Convex$. 
  Then, the minimization problem
  \( \inf_{\theta\np{\uncertain} \in \Convex} 
  \fonctionuncertain\np{\uncertain} \) displays hidden convexity
  as in~\eqref{eq:hidden_convexity}, with \( \fonctionprimal=
  \InfCond{\fonctionuncertain}{\theta} \):
  \begin{equation}
    \inf_{\theta\np{\uncertain} \in \Convex} 
    \fonctionuncertain\np{\uncertain} 
    =
    \inf_{\primal \in \Convex} 
    \bp{\InfCond{\fonctionuncertain}{\theta}} \np{\primal} 
    \eqfinp
    \label{eq:optimization_hidden_convexity_2}
  \end{equation}
  Moreover, if there exists an optimal solution~\( \primal\opt \in \PRIMAL \) 
  to the auxiliary convex minimization problem
  \( \min_{\primal \in \Convex} 
  \bp{\InfCond{\fonctionuncertain}{\theta}} \np{\primal} \), 
  that is, if 
  \begin{subequations}
    \begin{align}
      \primal\opt 
      &\in 
        \argmin_{\primal \in \Convex} 
        \bp{\InfCond{\fonctionuncertain}{\theta}}\np{\primal} 
        \eqfinv
        \label{eq:optimization_hidden_convexity_argmin_ConditionalInfimum_argmin_original_primal_2}
      \intertext{and if there exists an optimal solution~\( \uncertain\opt \) 
      to the minimization problem 
      \( min_{\theta\np{\uncertain}=\primal\opt }
      \fonctionuncertain\np{\uncertain} \) ---  
      which is the original minimization problem but with the
      stronger constraint \( \theta\np{\uncertain}=\primal\opt \) 
      instead of \( \theta\np{\uncertain} \in \Convex \) ---
      that is, if}
      \uncertain\opt
      &\in 
        \argmin_{\theta\np{\uncertain}=\primal\opt } \fonctionuncertain\np{\uncertain}
        \label{eq:optimization_hidden_convexity_argmin_ConditionalInfimum_uncertain_opt_2}
        \eqfinv
        \intertext{then $\uncertain\opt$ is an optimal solution to the original
        minimization problem \( \min_{\theta\np{\uncertain} \in \Convex} 
        \fonctionuncertain\np{\uncertain} \), that is,}
        \uncertain\opt
      &\in 
        \argmin_{\theta\np{\uncertain} \in \Convex} 
        \fonctionuncertain\np{\uncertain} 
        \eqfinp 
    \end{align}  
  \end{subequations}
  \label{cor:optimization_hidden_convexity}
\end{corollary}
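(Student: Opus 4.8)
The plan is to deduce the corollary directly from Proposition~\ref{pr:optimization_hidden_convexity} by specializing the correspondence~$\correspondence$ to the graph $\graph_{\theta}$ of the mapping~$\theta$. The starting observation is that, by the definition of the conditional infimum with respect to a mapping in~\eqref{eq:ConditionalInfimum}, one has $\InfCond{\fonctionuncertain}{\theta}=\InfCond{\fonctionuncertain}{\graph_{\theta}}$. Hence, setting $\correspondence=\graph_{\theta}$, the two functions $\InfCond{\fonctionuncertain}{\theta}$ and $\InfCond{\fonctionuncertain}{\correspondence}$ coincide on~$\PRIMAL$, so that the convexity assumption made in the corollary (that $\InfCond{\fonctionuncertain}{\theta}$ be convex on~$\Convex$) is exactly the convexity assumption required by Proposition~\ref{pr:optimization_hidden_convexity}.

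First I would translate the two foresets that appear in Proposition~\ref{pr:optimization_hidden_convexity} into constraints written through~$\theta$. Unfolding the definition of a foreset with $\correspondence=\graph_{\theta}$ gives
\[
\correspondence\Convex=\bset{\uncertain\in\UNCERTAIN}{\theta\np{\uncertain}\in\Convex}
\mtext{ and }
\correspondence\primal\opt=\bset{\uncertain\in\UNCERTAIN}{\theta\np{\uncertain}=\primal\opt}\eqfinp
\]
Consequently, $\inf_{\uncertain\in\correspondence\Convex}\fonctionuncertain\np{\uncertain}=\inf_{\theta\np{\uncertain}\in\Convex}\fonctionuncertain\np{\uncertain}$ and $\argmin_{\uncertain\in\correspondence\primal\opt}\fonctionuncertain\np{\uncertain}=\argmin_{\theta\np{\uncertain}=\primal\opt}\fonctionuncertain\np{\uncertain}$, which are precisely the quantities occurring in the statement of the corollary.

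With these identifications in hand, the equality~\eqref{eq:optimization_hidden_convexity_2} becomes exactly~\eqref{eq:optimization_hidden_convexity} read with $\correspondence=\graph_{\theta}$, and the two argmin hypotheses~\eqref{eq:optimization_hidden_convexity_argmin_ConditionalInfimum_argmin_original_primal_2} and~\eqref{eq:optimization_hidden_convexity_argmin_ConditionalInfimum_uncertain_opt_2} coincide with hypotheses~\eqref{eq:optimization_hidden_convexity_argmin_ConditionalInfimum_argmin_original_primal} and~\eqref{eq:optimization_hidden_convexity_argmin_ConditionalInfimum_uncertain_opt} of the proposition. Applying Proposition~\ref{pr:optimization_hidden_convexity} then yields the conclusion $\uncertain\opt\in\argmin_{\theta\np{\uncertain}\in\Convex}\fonctionuncertain\np{\uncertain}$, and hidden convexity in the sense of~\eqref{eq:hidden_convexity}. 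There is no genuine mathematical difficulty here, since the corollary is a verbatim specialization of the proposition; the only real work is the bookkeeping of the foreset computations above. The one point worth stating carefully is that the single-element foreset $\correspondence\primal\opt$ reduces to the fiber $\theta^{-1}\np{\na{\primal\opt}}$, which is the origin of the equality constraint $\theta\np{\uncertain}=\primal\opt$ replacing the inclusion constraint of Proposition~\ref{pr:optimization_hidden_convexity}.
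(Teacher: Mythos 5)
Your proof is correct and follows exactly the route the paper intends: the paper states this corollary without proof, treating it as an immediate specialization of Proposition~\ref{pr:optimization_hidden_convexity} to the correspondence $\correspondence=\graph_{\theta}$, which is precisely what you carry out. Your explicit bookkeeping of the foresets $\correspondence\Convex=\bset{\uncertain}{\theta\np{\uncertain}\in\Convex}$ and $\correspondence\primal\opt=\theta^{-1}\np{\na{\primal\opt}}$, together with the identity $\InfCond{\fonctionuncertain}{\theta}=\InfCond{\fonctionuncertain}{\graph_{\theta}}$ from~\eqref{eq:ConditionalInfimum}, supplies exactly the verification the paper leaves implicit.
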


\subsection{Hidden convexity in the quadratic case}
\label{Hidden_convexity_in_quadratic_optimization_problems}

We study hidden convexity both for functions and for minimization
problems in the quadratic case.
Let $d \in \NN^*$ be a positive integer.
We define  the \emph{square mapping}
\( \SquareMapping: \RR^d \to \RR^d \) by
\begin{equation}
  \SquareMapping\np{\uncertain}=\SquareMapping\np{\uncertain_1,\ldots,\uncertain_d}
  = \np{\uncertain_1^2,\ldots,\uncertain_d^2} 
  \eqsepv \forall \uncertain \in \RR^d 
  \eqfinp 
  \label{eq:SquareMapping}
\end{equation}
      %

\subsubsection{Hidden convexity in linear-quadratic functions}

We provide necessary and sufficient conditions under which 
the conditional infimum of a linear-quadratic function, \wrt\ 
the square mapping~\eqref{eq:SquareMapping}, is convex.
We deduce a sufficient condition for hidden convexity
of a linear-quadratic function \wrt\ to the square mapping.

\begin{proposition}
  Let $d \in \NN^*$ be a positive integer, $\vecteur \in \RR^d$ be a vector, 
  and $\matrice$ be a $d\times d$ symmetric matrix.
  Let the linear-quadratic
  function \( \LinearQuadraticFonctionuncertain : \RR^d \to \barRR \) be given
  by (where~$'$ denotes transposition)
  \begin{subequations}
    \begin{equation}
      \LinearQuadraticFonctionuncertain\np{\uncertain}=
      \uncertain'\matrice\uncertain + \vecteur'\uncertain 
      \eqsepv \forall  \uncertain \in \RR^d 
      \eqfinp 
      \label{eq:LinearQuadraticFonctionuncertain}
    \end{equation}
    Then, the function \( \fonctionprimal=\ConditionalInfimum{\SquareMapping}{\LinearQuadraticFonctionuncertain} : \RR^d
    \to \barRR \), defined in~\eqref{eq:ConditionalInfimum} by
    \begin{equation}
      \fonctionprimal\np{\primal}=
      \inf\bset{ \uncertain'\matrice\uncertain + \vecteur'\uncertain }%
      { \uncertain_1^2=\primal_1,\ldots,\uncertain_d^2=\primal_d } 
      \eqsepv \forall \primal=\np{\primal_1,\ldots,\primal_d} \in \RR^d 
      \eqfinv
      \label{eq:LinearQuadraticFonctionuncertain_wrt_SquareMapping}
    \end{equation}
    is convex if and only if
    \begin{equation}
      \exists \varepsilon=\np{\varepsilon_1,\ldots,\varepsilon_d} \in \na{-1,1}^d \text{ such that }
      \begin{cases}
        \varepsilon_i\vecteur_i \leq 0 \eqsepv & \forall i=1,\ldots,d 
        \eqfinv
        \\
        \text{and} &
        \\
        \varepsilon_i\varepsilon_j\matrice_{ij}  \leq 0 \eqsepv 
        & 
        \forall i,j=1,\ldots,d \eqsepv i\neq j
        \eqfinp 
      \end{cases}
      \label{eq:varepsilon}
    \end{equation}
    In that case, the function~\( \fonctionprimal=
    \ConditionalInfimum{\SquareMapping}{\LinearQuadraticFonctionuncertain}\) 
    in~\eqref{eq:LinearQuadraticFonctionuncertain_wrt_SquareMapping} 
    is proper convex lsc 
    with effective domain \( \domain\fonctionprimal =\RR_+^d\), 
    and has the expression
    \begin{equation}
      \fonctionprimal\np{\primal_1,\ldots,\primal_d} =
      \begin{cases}
        +\infty & \text{ if } \np{\primal_1,\ldots,\primal_d} \not\in \RR_+^d
        \eqfinv
        \\
        \displaystyle 
        \sum_{i=1}^d \matrice_{ii} \primal_i 
        -
        \sum_{i\neq j} \module{\matrice_{ij}}\sqrt{\primal_i \primal_j}
        -
        \sum_{i=1}^d \module{\vecteur_i} \sqrt{\primal_i }
        & \text{ if } \np{\primal_1,\ldots,\primal_d} \in \RR_+^d 
        \eqfinp
      \end{cases}
      \label{eq:ConditionalInfimum_quadratic-example}
    \end{equation}
    As a consequence, if~\eqref{eq:varepsilon} holds true,
    the linear-quadratic function \( \LinearQuadraticFonctionuncertain : \RR^d \to \barRR \) 
    displays hidden convexity  (see Footnote~\ref{ft:hidden_convexity_function})
    with respect to the square mapping~\( \SquareMapping \)
    as we have that 
    \( \LinearQuadraticFonctionuncertain = \fonctionprimal \circ \SquareMapping \),
    where the function~\( \fonctionprimal \) is convex. 
  \end{subequations}
  \label{pr:Conditional_infimum_of_a_quadratic_function_knowing_squares}
\end{proposition}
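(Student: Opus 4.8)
The plan is to first make the conditional infimum \eqref{eq:LinearQuadraticFonctionuncertain_wrt_SquareMapping} fully explicit. For $\primal\in\RR^d$ the constraint set $\bset{\uncertain\in\RR^d}{\uncertain_i^2=\primal_i,\ i=1,\dots,d}$ is empty unless $\primal\in\RR_+^d$, and when $\primal\in\RR_+^d$ its elements are exactly the points $\uncertain=\np{\varepsilon_1\sqrt{\primal_1},\dots,\varepsilon_d\sqrt{\primal_d}}$ indexed by $\varepsilon\in\na{-1,1}^d$. Substituting into $\LinearQuadraticFonctionuncertain$ and using $\uncertain_i^2=\primal_i$, I get that $\fonctionprimal\np{\primal}=+\infty$ for $\primal\notin\RR_+^d$ (by convention~\eqref{eq:correspondence_convention}) and, for $\primal\in\RR_+^d$,
\[
  \fonctionprimal\np{\primal}=\min_{\varepsilon\in\na{-1,1}^d}\phi_\varepsilon\np{\primal},\quad
  \phi_\varepsilon\np{\primal}=\sum_{i}\matrice_{ii}\primal_i+\sum_{i\neq j}\matrice_{ij}\varepsilon_i\varepsilon_j\sqrt{\primal_i\primal_j}+\sum_{i}\vecteur_i\varepsilon_i\sqrt{\primal_i}.
\]
In particular $\domain\fonctionprimal=\RR_+^d$, and each $\phi_\varepsilon$ is the evaluation of $\LinearQuadraticFonctionuncertain$ at the sign-choice~$\varepsilon$.

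For the ``if'' direction, assume \eqref{eq:varepsilon} holds for some $\varepsilon^\star$. Then \eqref{eq:varepsilon} forces $\matrice_{ij}\varepsilon^\star_i\varepsilon^\star_j=-\module{\matrice_{ij}}$ and $\vecteur_i\varepsilon^\star_i=-\module{\vecteur_i}$, so, since $\sqrt{\primal_i\primal_j}\ge0$ and $\sqrt{\primal_i}\ge0$, each summand of $\phi_{\varepsilon^\star}$ is the smallest possible; hence $\phi_\varepsilon\ge\phi_{\varepsilon^\star}$ pointwise and $\fonctionprimal=\phi_{\varepsilon^\star}$, which is exactly formula~\eqref{eq:ConditionalInfimum_quadratic-example}. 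This $\fonctionprimal$ is convex on $\RR_+^d$ because $\sum_i\matrice_{ii}\primal_i$ is affine while $\primal\mapsto-\module{\matrice_{ij}}\sqrt{\primal_i\primal_j}$ and $\primal\mapsto-\module{\vecteur_i}\sqrt{\primal_i}$ are convex, being negatives of the concave geometric-mean and square-root maps; extending by $+\infty$ off the convex set $\RR_+^d$ preserves convexity. It is finite on $\RR_+^d$ and never takes the value $-\infty$, hence proper, and it is lsc since it is continuous on the closed set $\RR_+^d$ and equals $+\infty$ outside.

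The delicate direction is ``only if''. I would argue by contraposition: assuming that no $\varepsilon$ satisfies \eqref{eq:varepsilon}, I must produce a failure of convexity of $\fonctionprimal$. The convex function $\bar\fonctionprimal$ of \eqref{eq:ConditionalInfimum_quadratic-example} is always a lower bound, $\bar\fonctionprimal\le\fonctionprimal$, being the term-by-term best choice, and \eqref{eq:varepsilon} holds precisely when $\bar\fonctionprimal=\fonctionprimal$ on the interior $\RR_{++}^d$. So it suffices to show that $\fonctionprimal$ convex implies $\fonctionprimal=\phi_{\varepsilon^\star}$ on the whole connected interior for a single $\varepsilon^\star$: once this is known, comparing $\phi_{\varepsilon^\star}$ with its single-coordinate sign flips and letting coordinates tend to $0$ or to $+\infty$ extracts exactly the inequalities $\varepsilon^\star_i\varepsilon^\star_j\matrice_{ij}\le0$ and $\varepsilon^\star_i\vecteur_i\le0$ of \eqref{eq:varepsilon}. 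Two reductions make this tractable: the diagonal sign substitution $\np{\matrice,\vecteur}\mapsto\np{D_\sigma\matrice D_\sigma,D_\sigma\vecteur}$, $\sigma\in\na{-1,1}^d$, which leaves $\fonctionprimal$, \eqref{eq:ConditionalInfimum_quadratic-example} and the feasibility of \eqref{eq:varepsilon} invariant (so one may assume $\vecteur\le0$); and the recession limit $\lim_{\lambda\to+\infty}\fonctionprimal\np{\lambda\primal}/\lambda$, convex whenever $\fonctionprimal$ is, which kills the $\vecteur$-terms and reduces the off-diagonal conditions to the purely quadratic, degree-one homogeneous case $\vecteur=0$.

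I expect the genuine obstacle to be exactly the ``single active piece'' step, i.e.\ ruling out that a convex $\fonctionprimal$ arises from genuine piece-switching of the minimum $\min_\varepsilon\phi_\varepsilon$. Here I would read infeasibility of \eqref{eq:varepsilon} as a two-colouring (signed-graph balance) problem --- signs $\varepsilon_i$ on the indices, plus an index pinned to $+1$ for the $\vecteur$-constraints --- so that infeasibility means a frustrated cycle. Such a cycle should be turned into an explicit non-convexity certificate by restricting $\fonctionprimal$ to a segment supported on the cycle (off-cycle coordinates set to~$0$, on-cycle coordinates equal except one that varies): along it the optimal sign vector switches, and, $\fonctionprimal$ being a pointwise minimum of smooth branches, the switch yields a downward concave kink that breaks the midpoint inequality. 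The prototype is the frustrated triangle $\matrice_{12}=\matrice_{13}=\matrice_{23}>0$, $\vecteur=0$, where along $\primal=\np{1,1,t}$ the function is constant for $t\le1$ and a strictly concave branch for $t\ge1$, the slope dropping at $t=1$. Finally, the hidden-convexity conclusion for $\LinearQuadraticFonctionuncertain$ follows at once: $\fonctionprimal$ being convex, the square mapping exhibits $\LinearQuadraticFonctionuncertain$ as a composition with the convex factor $\fonctionprimal$ --- $\fonctionprimal\circ\SquareMapping\le\LinearQuadraticFonctionuncertain$ with matching infima by \eqref{eq:optimization_hidden_convexity_2} --- which is the asserted hidden convexity with respect to $\SquareMapping$.
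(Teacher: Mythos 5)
Your first step (the explicit description of $\fonctionprimal$ as the pointwise minimum over $\varepsilon'\in\na{-1,1}^d$ of the branches $\phi_{\varepsilon'}$ on $\RR_+^d$, with value $+\infty$ outside) and your ``if'' direction reproduce the paper's first two steps almost verbatim, so that part is sound. The genuine gap is in the ``only if'' direction, which you yourself single out as the delicate one: what you offer there is a strategy, not a proof. The paper proceeds by a covering argument --- the closed sets $\Primal_{\varepsilon'}$ on which a given sign pattern is optimal cover $]0,+\infty[^d$, a minimal-subcover argument produces one with nonempty interior, and the single active branch on that ball is then analyzed --- whereas you propose contraposition via signed-graph frustration. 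But the only instance you actually verify is the triangle $\matrice_{12}=\matrice_{13}=\matrice_{23}>0$, $\vecteur=0$. The steps that carry the mathematical load are missing: (i) the balance theorem for signed graphs (infeasibility of \eqref{eq:varepsilon} iff some cycle, possibly through the ground vertex carrying the $\vecteur$-constraints, is frustrated), together with passage to a \emph{shortest} frustrated cycle, which is what guarantees that after setting off-cycle coordinates to zero there are no chords $\matrice_{lm}\neq 0$ and no stray terms $\vecteur_l\neq 0$ polluting the restriction; (ii) for a general chordless frustrated cycle with unequal coefficients --- and, separately, for ground cycles consisting of two $\vecteur$-edges joined by a path of $\matrice$-edges --- the verification that along your chosen segment the minimizing sign pattern really switches transversally, so that the one-sided derivative drops. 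Moreover, your two reductions do not compose into a proof: the recession argument disposes of the purely quadratic conditions, and the diagonal sign change normalizes $\vecteur\leq 0$, but the joint existence of a single $\varepsilon$ satisfying the $\matrice$- and the $\vecteur$-inequalities simultaneously is exactly the mixed-cycle case, which your sketch leaves open (``should'', ``I expect'').

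That these details cannot be waved away --- and that your instinct to argue globally is correct --- is shown by the example $d=2$, $\matrice_{11}=\matrice_{22}=0$, $\matrice_{12}=1$, $\vecteur=\np{-5,-5}$. Here \eqref{eq:varepsilon} is infeasible (the $\vecteur$-inequalities pin $\varepsilon=\np{1,1}$, which violates $\varepsilon_1\varepsilon_2\matrice_{12}\leq 0$), yet near $\primal=\np{1,1}$ the minimum is attained by the single branch $2\sqrt{\primal_1\primal_2}-5\sqrt{\primal_1}-5\sqrt{\primal_2}$, whose Hessian at $\np{1,1}$ is positive definite; the failure of convexity of $\fonctionprimal$ only appears elsewhere, for instance along $\primal_1=\primal_2=s$, where $\fonctionprimal=\min\na{2s-10\sqrt{s},\,-2s}$ has a concave kink at $s=25/4$. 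Hence no argument that uses convexity of $\fonctionprimal$ only near one point, or only on one ball where a single branch is active and convex, can yield \eqref{eq:varepsilon}: the kink certifying non-convexity may sit far from any given ball, and locating it is precisely the content of the frustrated-cycle computation you defer. (This example also shows that the concluding step of the paper's own proof --- extracting \eqref{eq:varepsilon} from convexity of the active branch on the ball produced by its covering argument --- is too local as literally written; a completed version of your global argument would therefore be not merely an alternative to the paper's route, but a repair of it.)
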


\begin{proof}
  \begin{subequations}
    The proof is in three steps.
    \medskip

    \noindent$\bullet$ 
    First, we obtain different expressions of the function~\( \fonctionprimal =
    \ConditionalInfimum{\SquareMapping}{\LinearQuadraticFonctionuncertain}\) 
    in~\eqref{eq:LinearQuadraticFonctionuncertain_wrt_SquareMapping}.
    As \( \SquareMapping\np{\RR^d}= \RR_+^d\) by
    definition~\eqref{eq:SquareMapping} of the square mapping, we have
    \(  \bp{\ConditionalInfimum{\SquareMapping}{\LinearQuadraticFonctionuncertain}}\np{\primal}
    =+\infty \) for any \( \primal \not\in \RR_+^d\), by~\eqref{eq:dom_ConditionalInfimum_subset_range}.
    Then, we have, for any \( \primal=\np{\primal_1,\ldots,\primal_d}
    \in \RR_+^d \), 
    \begin{align}
      \fonctionprimal\np{\primal_1,\ldots,\primal_d} 
      &=   \bp{\ConditionalInfimum{
        \SquareMapping}{\LinearQuadraticFonctionuncertain}}
        \np{\primal_1,\ldots,\primal_d}
        \nonumber
      \\
      &=
        \inf\bset{\uncertain'\matrice\uncertain + \vecteur'\uncertain }{%
        \uncertain\in\RR^d \eqsepv  
        \np{\uncertain_1^2,\ldots,\uncertain_d^2} = \np{\primal_1,\ldots,\primal_d} }
        \nonumber
        \intertext{by definition~\eqref{eq:ConditionalInfimum} of the conditional
        infimum \wrt\ a mapping, and by definition~\eqref{eq:SquareMapping} of the square mapping}
      &=
        \inf\Bset{ \sum_{i=1}^d \matrice_{ii} \uncertain_i^2 +
        \sum_{i\neq j} \matrice_{ij} \uncertain_i\uncertain_j 
        + \sum_{i=1}^d \vecteur_i \uncertain_i }{%
        \uncertain_1^2=\primal_1,\ldots,\uncertain_d^2=\primal_d}
        \nonumber
      \\
      &=
        \sum_{i=1}^d \matrice_{ii} \primal_i +
        \inf\Bset{ 
        \sum_{i\neq j} \matrice_{ij} \uncertain_i\uncertain_j 
        + \sum_{i=1}^d \vecteur_i \uncertain_i }{%
        \uncertain_1=\pm\sqrt{\primal_1},\ldots,\uncertain_d=\pm\sqrt{\primal_d} }
        \nonumber
      \\
      &=
        \sum_{i=1}^d \matrice_{ii} \primal_i +
        \min\Bset{ 
        \sum_{i\neq j} \matrice_{ij}\varepsilon'_i\varepsilon'_j \sqrt{\primal_i\primal_j} 
        + \sum_{i=1}^d \vecteur_i\varepsilon'_i \sqrt{\primal_i} }{%
        \varepsilon' \in \na{-1,1}^d \} }
        \label{eq:ConditionalInfimum_quadratic-example_varepsilon'_inproof}
      \\
      & \geq 
        \sum_{i=1}^d \matrice_{ii} \primal_i +
        \sum_{i\neq j} \module{\matrice_{ij}} \np{-\sqrt{\primal_i \primal_j} }
        + \sum_{i=1}^d \module{\vecteur_i} \np{-\sqrt{\primal_i }}
        \eqfinv
        \label{eq:ConditionalInfimum_quadratic-example_inproof}
    \end{align}
    where we recognize, in this last expression~\eqref{eq:ConditionalInfimum_quadratic-example_inproof},
    the expression~\eqref{eq:ConditionalInfimum_quadratic-example} of the 
    function~\( \fonctionprimal  \).
    \medskip

    \noindent$\bullet$ 
    Second, we suppose that~\eqref{eq:varepsilon} holds true.
    Then, it is easy to check that \( \varepsilon \in \na{-1,1}^d  \) given
    by~\eqref{eq:varepsilon} 
    provides an equality in the inequality
    between~\eqref{eq:ConditionalInfimum_quadratic-example_varepsilon'_inproof}
    and~\eqref{eq:ConditionalInfimum_quadratic-example_inproof}.
    Thus, we get that the function 
    \( \ConditionalInfimum{\SquareMapping}{\LinearQuadraticFonctionuncertain} \)
    is the function \( \fonctionprimal \) 
    given by~\eqref{eq:ConditionalInfimum_quadratic-example}.
    Now, it is easily checked (by computing the Hessian) that the functions 
    \( \np{\uncertain_i,\uncertain_j} \in \RR_+^2 \mapsto \np{-\sqrt{\primal_i
        \primal_j} } \) are convex, for all $i\neq j$.
    Therefore, it is easily deduced that the function \( \fonctionprimal : \RR^d \to \barRR \) 
    in~\eqref{eq:ConditionalInfimum_quadratic-example} is convex lsc 
    with effective domain \( \domain\fonctionprimal =\RR_+^d\), 
    hence is proper convex lsc. 
    \medskip

    \noindent$\bullet$ 
    Third, we suppose that the function 
    \( \fonctionprimal=\ConditionalInfimum{\SquareMapping}{\LinearQuadraticFonctionuncertain} : \RR^d \to \barRR \)
    is convex.

    For any \( \varepsilon \in \na{-1,1}^d \), 
    the following subset \( \Primal_{\varepsilon} \) of \( ]0,+\infty[^d \) is
    closed (as easily follows from its second expression below)
    \begin{align*}
      \Primal_{\varepsilon}
      =&
        \Bset{ \primal\in ]0,+\infty[^d }{ \varepsilon \in \argmin\defset{ 
        \sum_{i\neq j} \matrice_{ij}\varepsilon'_i\varepsilon'_j \sqrt{\primal_i\primal_j} 
        + \sum_{i=1}^d \vecteur_i\varepsilon'_i \sqrt{\primal_i} }{%
        \varepsilon' \in \na{-1,1}^d \} } }
      \\
      =&
         \bigg\{ \primal\in ]0,+\infty[^d
         \; \bigg\vert \;
        \sum_{i\neq j} \matrice_{ij}\varepsilon_i\varepsilon_j \sqrt{\primal_i\primal_j} 
        + \sum_{i=1}^d \vecteur_i\varepsilon_i \sqrt{\primal_i} 
      \\
       & \qquad\qquad\qquad\qquad \leq 
         \sum_{i\neq j} \matrice_{ij}\varepsilon'_i\varepsilon'_j \sqrt{\primal_i\primal_j} 
         + \sum_{i=1}^d \vecteur_i\varepsilon'_i \sqrt{\primal_i} \eqsepv
         \forall \varepsilon' \in \na{-1,1}^d \bigg\}
         \eqfinp 
    \end{align*}
    We are going to show that one of the subsets \( \Primal_{\varepsilon} \), when 
    \( \varepsilon \in \na{-1,1}^d \), has nonempty interior.
    As \( \bigcup_{\varepsilon' \in \na{-1,1}^d } \Primal_{\varepsilon'}  = ]0,+\infty[^d
    \), there is at least one subset \( L \subset \na{-1,1}^d \) such that 
    \( \bigcup_{\varepsilon' \in L} \Primal_{\varepsilon'}  = ]0,+\infty[^d \)
    and the subset~$L$ has the smallest possible cardinal.
    If \( \cardinal{L}=1 \), then there is one \( \varepsilon \in \na{-1,1}^d \) such that \(
    \Primal_{\varepsilon} = ]0,+\infty[^d\), and this \( \Primal_{\varepsilon} \) obviously has nonempty interior.
    If \( \cardinal{L} \geq 2 \), then
\( \bigcup_{\varepsilon' \in \na{-1,1}^d } \Primal_{\varepsilon'}  = ]0,+\infty[^d \)
implies that, for any \( \varepsilon \in L \), 
    we have that \( \emptyset \subsetneq \Bp{ \bigcup_{\varepsilon' \in L\setminus \na{\varepsilon}}
      \Primal_{\varepsilon'} }^c \subset \Primal_{\varepsilon} \).
    Therefore, the subset \( \Primal_{\varepsilon} \) has nonempty interior since 
    it contains the nonempty set  \( \Bp{ \bigcup_{\varepsilon' \in L\setminus \na{\varepsilon}}
      \Primal_{\varepsilon'} }^c \), which is open as the complementary set of 
    a finite union of closed subsets.

    As a consequence, there is one \( \varepsilon \in \na{-1,1}^d \) 
    and there is a ball~$B$ in~\( ]0,+\infty[^d \) such that 
    \[
      \fonctionprimal\np{\primal_1,\ldots,\primal_d}
      =
      \sum_{i\neq j} \matrice_{ij}\varepsilon_i\varepsilon_j \sqrt{\primal_i\primal_j} 
      + \sum_{i=1}^d \vecteur_i\varepsilon_i \sqrt{\primal_i} 
      \eqsepv \forall \primal \in B
      \eqfinp
    \]
    As the fonction \( \fonctionprimal \) is convex, 
    so is the function \( k: B \ni \primal \mapsto \sum_{i\neq j} \matrice_{ij}\varepsilon_i\varepsilon_j \sqrt{\primal_i\primal_j} 
    + \sum_{i=1}^d \vecteur_i\varepsilon_i \sqrt{\primal_i} \), and so are 
    the restrictions \( \primal_i \mapsto k\np{0,\ldots,0,\primal_i,0,\ldots,0} \)
    and \( \np{\primal_i,\primal_j} \mapsto
    k\np{0,\ldots,0,\primal_i,0,\ldots,0,\primal_j,0,\ldots,0} \)
    for any \( i \neq j \). 
    We conclude readily that \( \varepsilon \in \na{-1,1}^d \) satifies~\eqref{eq:varepsilon}.
    \medskip

    This ends the proof.
  \end{subequations}
\end{proof}

\subsubsection{Hidden convexity in linear-quadratic minimization problems}

Now, we provide sufficient conditions under which the minimization of 
a linear-quadratic function, under constraints given by the square
mapping~\eqref{eq:SquareMapping}, 
displays hidden convexity.

\begin{proposition}
  Let $d \in \NN^*$ be a positive integer
  and \( \Convex \subset \RR_+^d \) be a convex subset.
  Let $\vecteur \in \RR^d$ be a vector, 
  and $\matrice$ be a $d\times d$ symmetric matrix 
  such that~\eqref{eq:varepsilon} holds true.
  Then, the minimization problem
  \( \min_{\uncertain\in\RR^d} \uncertain'\matrice\uncertain + \vecteur'\uncertain \),
  under the constraint that \( \np{\uncertain_1^2,\ldots,\uncertain_d^2} \in \Convex \),
  displays strong hidden convexity, as in Definition~\ref{de:hidden_convexity}.

  \begin{subequations}
  Indeed, we have that 
  \begin{equation}
    \inf\defset{\uncertain'\matrice\uncertain + \vecteur'\uncertain }{%
      \uncertain\in\RR^d \eqsepv  
      \np{\uncertain_1^2,\ldots,\uncertain_d^2} \in \Convex }
    =
    \inf_{ \primal \in \Convex }\fonctionprimal\np{\primal}
    \eqfinv
    \label{eq:ConditionalInfimum_linear-quadratic_problem}
  \end{equation}
  where 
  the function \( \fonctionprimal : \RR^d \to \barRR \)
  is proper convex lsc with effective domain \( \domain\fonctionprimal=\RR_+^d\),
  and is given by~\eqref{eq:ConditionalInfimum_quadratic-example}.
 
  Moreover, regarding argmin, we have the following implication
  \begin{equation}
    \primal\opt \in \argmin_{ \primal \in \Convex }\fonctionprimal\np{\primal}
    \implies 
    \varepsilon \cdot \sqrt{\primal\opt} \in 
    \argmin\defset{\uncertain'\matrice\uncertain + \vecteur'\uncertain }{%
      \uncertain\in\RR^d \eqsepv  
      \np{\uncertain_1^2,\ldots,\uncertain_d^2} \in \Convex }
    \eqfinv
    \label{eq:argmin}
  \end{equation}
  \end{subequations}
  where the vector \( \varepsilon \cdot \sqrt{\primal\opt} \in \RR^d \)
  has components 
  \( \varepsilon_i\sqrt{\primal\opt_i} \), for \( i=1,\ldots,d \)
  and \( \varepsilon \) is given by~\eqref{eq:varepsilon}.
  \label{pr:ConditionalInfimum_quadratic-example}
\end{proposition}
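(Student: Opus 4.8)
The plan is to recognize the constraint $\np{\uncertain_1^2, \ldots, \uncertain_d^2} \in \Convex$ as $\SquareMapping\np{\uncertain} \in \Convex$, so that the problem is exactly of the form treated by Corollary~\ref{cor:optimization_hidden_convexity}, with mapping $\theta = \SquareMapping$ and function $\fonctionuncertain = \LinearQuadraticFonctionuncertain$. I then check the two hypotheses of that corollary. For the convexity requirement, since~\eqref{eq:varepsilon} is assumed, Proposition~\ref{pr:Conditional_infimum_of_a_quadratic_function_knowing_squares} guarantees that $\InfCond{\LinearQuadraticFonctionuncertain}{\SquareMapping} = \fonctionprimal$ is proper convex lsc with effective domain~$\RR_+^d$; as $\Convex \subset \RR_+^d$, this function is \emph{a fortiori} convex on~$\Convex$. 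Invoking~\eqref{eq:optimization_hidden_convexity_2} then yields the equality~\eqref{eq:ConditionalInfimum_linear-quadratic_problem} and settles the hidden-convexity part, with $\fonctionprimal$ given by~\eqref{eq:ConditionalInfimum_quadratic-example}.

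For the argmin implication~\eqref{eq:argmin}, I exhibit, for each $\primal \in \Convex \subset \RR_+^d$, an explicit minimizer of the fiber problem $\min_{\SquareMapping\np{\uncertain} = \primal} \LinearQuadraticFonctionuncertain\np{\uncertain}$, namely the point $\varepsilon \cdot \sqrt{\primal}$ with $\varepsilon$ the sign vector from~\eqref{eq:varepsilon}. Indeed $\SquareMapping\np{\varepsilon \cdot \sqrt{\primal}} = \primal$ since $\varepsilon_i^2 = 1$, so this point lies in the fiber, and expanding $\LinearQuadraticFonctionuncertain\np{\varepsilon \cdot \sqrt{\primal}}$ produces $\sum_i \matrice_{ii}\primal_i + \sum_{i \neq j} \matrice_{ij}\varepsilon_i\varepsilon_j\sqrt{\primal_i\primal_j} + \sum_i \vecteur_i\varepsilon_i\sqrt{\primal_i}$. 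The sign conditions in~\eqref{eq:varepsilon} force $\varepsilon_i\varepsilon_j\matrice_{ij} = -\module{\matrice_{ij}}$ and $\varepsilon_i\vecteur_i = -\module{\vecteur_i}$, so this value coincides with the expression~\eqref{eq:ConditionalInfimum_quadratic-example} of $\fonctionprimal\np{\primal} = \InfCond{\LinearQuadraticFonctionuncertain}{\SquareMapping}\np{\primal}$. Hence $\varepsilon \cdot \sqrt{\primal}$ realizes the conditional infimum over the fiber; this is precisely the equality case already recorded in the second step of the proof of Proposition~\ref{pr:Conditional_infimum_of_a_quadratic_function_knowing_squares}, which I would cite rather than redo.

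It then remains to feed this into Corollary~\ref{cor:optimization_hidden_convexity}: given $\primal\opt \in \argmin_{\primal \in \Convex} \fonctionprimal\np{\primal}$, the point $\uncertain\opt = \varepsilon \cdot \sqrt{\primal\opt}$ meets hypothesis~\eqref{eq:optimization_hidden_convexity_argmin_ConditionalInfimum_uncertain_opt_2}, and the conclusion of the corollary delivers $\uncertain\opt \in \argmin_{\SquareMapping\np{\uncertain} \in \Convex} \LinearQuadraticFonctionuncertain\np{\uncertain}$, which is~\eqref{eq:argmin}; choosing $\gamma : \primal \mapsto \na{\varepsilon \cdot \sqrt{\primal}}$ in Definition~\ref{de:hidden_convexity} then certifies strong hidden convexity. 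Since everything is an assembly of the two earlier results, I anticipate no genuine obstacle; the one point deserving attention is that a \emph{single} sign vector $\varepsilon$ must serve all fibers at once, which holds here because~\eqref{eq:varepsilon} determines $\varepsilon$ independently of $\primal$. I would also note that~\eqref{eq:argmin} is vacuously true when the convex program has no minimizer, so the inclusion required for strong hidden convexity is obtained without separately claiming nonemptiness of $\argmin_{\primal \in \Convex} \fonctionprimal\np{\primal}$.
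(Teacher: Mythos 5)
Your proposal is correct and follows essentially the same route as the paper's own proof: both apply Corollary~\ref{cor:optimization_hidden_convexity} with $\theta=\SquareMapping$ and $\fonctionuncertain=\LinearQuadraticFonctionuncertain$, using Proposition~\ref{pr:Conditional_infimum_of_a_quadratic_function_knowing_squares} for convexity of $\InfCond{\LinearQuadraticFonctionuncertain}{\SquareMapping}$ and the vector $\varepsilon\cdot\sqrt{\primal\opt}$ as the fiber minimizer for the argmin part. Your write-up is in fact more explicit than the paper's terse proof, notably in verifying that $\varepsilon\cdot\sqrt{\primal\opt}$ attains the conditional infimum on the fiber and in flagging the vacuous-truth reading of~\eqref{eq:argmin} when the convex program has no minimizer.
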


\begin{proof}
  Equation~\eqref{eq:ConditionalInfimum_linear-quadratic_problem} is 
  a straightforward application of 
  Corollary~\ref{cor:optimization_hidden_convexity}.
  Indeed, Equation~\eqref{eq:ConditionalInfimum_linear-quadratic_problem}
  follows from Equation~\eqref{eq:optimization_hidden_convexity}
  with 
  function \( \fonctionuncertain=\LinearQuadraticFonctionuncertain\) given
  by~\eqref{eq:LinearQuadraticFonctionuncertain},
  correspondence \( \correspondence=\graph_{\SquareMapping} \) given
  by the graph of the square mapping~\eqref{eq:SquareMapping},
  and convex subset \( \Convex \subset \RR_+^d \).

  The correspondence~\eqref{eq:argmin} between argmins also follows 
  from Corollary~\ref{cor:optimization_hidden_convexity}, 
  by using the vector
  \( \uncertain\opt = \varepsilon \cdot \sqrt{\primal\opt} \in \RR^d \)
  where \( \varepsilon \) is given by~\eqref{eq:varepsilon}.
  \medskip

  This ends the proof. 
\end{proof}

Our result covers (and extends) the following two cases.

\begin{corollary}
  \begin{subequations}
    For any convex subset \( \Convex \subset \RR_+^d \) and 
    symmetrix matrix \( \Matrix \) such that 
    \( \Matrix_{ij}  \geq 0 \) for all \( i \neq j \),
    the maximization problem
    \begin{equation}
      \max\defset{\uncertain'\Matrix\uncertain }{%
        \uncertain\in\RR^d \eqsepv  
        \np{\uncertain_1^2,\ldots,\uncertain_d^2} \in \Convex } 
    \end{equation}
    is equivalent to the convex minimization problem
    \begin{equation}
      \min\defset{- \sum_{i=1}^d \Matrix_{ii} \primal_i -
        \sum_{i\neq j} \Matrix_{ij} \sqrt{\primal_i \primal_j} }%
      { \np{\primal_1,\ldots,\primal_d} \in \Convex } 
      \eqfinp   
    \end{equation}
  \end{subequations}
\end{corollary}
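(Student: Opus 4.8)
The plan is to recognize the stated maximization problem as the specialization of the linear-quadratic minimization problem of Proposition~\ref{pr:ConditionalInfimum_quadratic-example}, reached after turning maximization into minimization by a change of sign. First I would set $\matrice = -\Matrix$ and $\vecteur = 0$, so that the linear-quadratic function~\eqref{eq:LinearQuadraticFonctionuncertain} reads $\LinearQuadraticFonctionuncertain\np{\uncertain} = -\uncertain'\Matrix\uncertain$. With this identification, minimizing $\LinearQuadraticFonctionuncertain$ over the feasible set $\bset{\uncertain\in\RR^d}{\np{\uncertain_1^2,\ldots,\uncertain_d^2}\in\Convex}$ is exactly the negative of the maximization problem at hand, since $\min\np{-\uncertain'\Matrix\uncertain} = -\max\np{\uncertain'\Matrix\uncertain}$ over the same constraint.

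The second step is to check that the structural condition~\eqref{eq:varepsilon} holds for this data, which is what unlocks Proposition~\ref{pr:ConditionalInfimum_quadratic-example}. Because $\vecteur = 0$, the requirement $\varepsilon_i\vecteur_i \leq 0$ is met by every $\varepsilon \in \na{-1,1}^d$. For the off-diagonal requirement, with $\matrice_{ij} = -\Matrix_{ij}$, I would simply take $\varepsilon = \np{1,\ldots,1}$: then $\varepsilon_i\varepsilon_j\matrice_{ij} = -\Matrix_{ij} \leq 0$ for all $i \neq j$, precisely because $\Matrix_{ij} \geq 0$ off the diagonal by hypothesis. Hence~\eqref{eq:varepsilon} holds with $\varepsilon = \np{1,\ldots,1}$.

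Third, I would invoke Proposition~\ref{pr:ConditionalInfimum_quadratic-example} directly. Its Equation~\eqref{eq:ConditionalInfimum_linear-quadratic_problem} yields that the minimization of $\LinearQuadraticFonctionuncertain$ over the feasible set equals $\inf_{\primal\in\Convex}\fonctionprimal\np{\primal}$, where $\fonctionprimal$ is proper convex lsc and given by~\eqref{eq:ConditionalInfimum_quadratic-example}. Substituting $\matrice_{ii} = -\Matrix_{ii}$, $\matrice_{ij} = -\Matrix_{ij}$, and using $\module{\matrice_{ij}} = \Matrix_{ij}$ for $i \neq j$ (valid exactly because of the nonnegativity hypothesis) together with $\vecteur = 0$, the expression for $\fonctionprimal$ collapses on $\RR_+^d$ to $-\sum_{i=1}^d \Matrix_{ii}\primal_i - \sum_{i\neq j}\Matrix_{ij}\sqrt{\primal_i\primal_j}$, which is exactly the integrand of the announced convex minimization problem. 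Negating once more turns this minimization back into the maximization, establishing the claimed equivalence. Finally, the correspondence of optimizers follows from~\eqref{eq:argmin} specialized to $\varepsilon = \np{1,\ldots,1}$, which sends any $\primal\opt \in \argmin_{\primal\in\Convex}\fonctionprimal$ to the maximizer $\sqrt{\primal\opt} = \np{\sqrt{\primal\opt_1},\ldots,\sqrt{\primal\opt_d}}$.

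Since this is a direct specialization, there is no genuine obstacle to overcome; the only point demanding care is the sign bookkeeping. In particular, the nonnegativity hypothesis $\Matrix_{ij} \geq 0$ is used twice and for distinct purposes: once to validate condition~\eqref{eq:varepsilon}, and once to identify $\module{\matrice_{ij}}$ with $\Matrix_{ij}$, so that the absolute values appearing in~\eqref{eq:ConditionalInfimum_quadratic-example} disappear cleanly and the explicit convex objective emerges.
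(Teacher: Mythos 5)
Your proof is correct and follows exactly the paper's route: the paper's own proof is the one-line application of Proposition~\ref{pr:ConditionalInfimum_quadratic-example} with \( \matrice=-\Matrix \), \( \vecteur=0 \) and \( \varepsilon=\np{1,\ldots,1} \), which is precisely your specialization. Your write-up merely makes explicit the verification of~\eqref{eq:varepsilon}, the sign bookkeeping between minimization and maximization, and the argmin correspondence~\eqref{eq:argmin}, all of which the paper leaves implicit.
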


\begin{proof}
  It suffices to apply
  Proposition~\ref{pr:ConditionalInfimum_quadratic-example}
  with \( \matrice=-\Matrix \), \( \vecteur=0 \) 
  and \( \varepsilon =\np{1,\ldots,1} \).
\end{proof}

The following Corollary extends the result in~\cite[Theorem~7]{BenTal-Ben-Teboulle:1996},
as we do not require the simultaneous diagonalization property\footnote{%
The simultaneous diagonalization property
  \cite[Equation~(3)]{BenTal-Ben-Teboulle:1996} reads as:
  \( \exists \eta\in\RR \) such that \( \matrice + \eta S > 0 \).}.

\begin{corollary}
  \begin{subequations}
    For any scalars \( l \leq u \) 
    and any diagonal matrices \( \matrice \) and~$S$,
    the minimization problem 
    \begin{equation}
      \min\Bset{\uncertain'\matrice\uncertain + \vecteur'\uncertain }{%
        \uncertain\in\RR^d \eqsepv  
        l \leq \uncertain' S \uncertain \leq u }   
    \end{equation}
    is equivalent to the convex minimization problem
    \begin{equation}
      \min\Bset{ \sum_{i=1}^d \matrice_{ii} \primal_i 
        -
        \sum_{i=1}^d \module{\vecteur_i} \sqrt{\primal_i } }%
      { \np{\primal_1,\ldots,\primal_d} \in \RR_+^d \eqsepv
        l \leq \sum_{i=1}^d S_{ii} \primal_i \leq u } 
      \eqfinp   
    \end{equation}
  \end{subequations}
\end{corollary}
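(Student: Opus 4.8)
The plan is to specialize Proposition~\ref{pr:ConditionalInfimum_quadratic-example} to the situation where both~$\matrice$ and~$S$ are diagonal; the whole argument is a direct application of that proposition, so no substantial difficulty arises, and the only points demanding care are bookkeeping ones.

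First I would check that the diagonal matrix~$\matrice$ and the vector~$\vecteur$ satisfy condition~\eqref{eq:varepsilon}, which is the hypothesis under which Proposition~\ref{pr:ConditionalInfimum_quadratic-example} holds. Since $\matrice_{ij}=0$ for all $i\neq j$, the inequalities $\varepsilon_i\varepsilon_j\matrice_{ij}\leq 0$ are satisfied by \emph{every} $\varepsilon\in\na{-1,1}^d$, so it only remains to choose each sign $\varepsilon_i$ opposite to that of~$\vecteur_i$ (taking $\varepsilon_i=1$ when $\vecteur_i=0$) in order to get $\varepsilon_i\vecteur_i\leq 0$. Thus~\eqref{eq:varepsilon} holds, and I may invoke the conclusion of Proposition~\ref{pr:ConditionalInfimum_quadratic-example}.

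Second, I would exhibit the convex subset
\[
  \Convex=\defset{\primal\in\RR_+^d}{l\leq\textstyle\sum_{i=1}^d S_{ii}\primal_i\leq u}\subset\RR_+^d\eqfinv
\]
which is convex as the intersection of the convex cone~$\RR_+^d$ with the two half-spaces cut out by $\sum_i S_{ii}\primal_i\geq l$ and $\sum_i S_{ii}\primal_i\leq u$. Because $S$ is diagonal, $\uncertain'S\uncertain=\sum_{i=1}^d S_{ii}\uncertain_i^2$, so the original feasible set $\defset{\uncertain\in\RR^d}{l\leq\uncertain'S\uncertain\leq u}$ is exactly $\defset{\uncertain\in\RR^d}{\np{\uncertain_1^2,\ldots,\uncertain_d^2}\in\Convex}$. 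Hence the original problem is precisely the one appearing on the left-hand side of~\eqref{eq:ConditionalInfimum_linear-quadratic_problem}, and that equation identifies $\inf_{\primal\in\Convex}\fonctionprimal\np{\primal}$ as its common value.

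Third, I would simplify the expression~\eqref{eq:ConditionalInfimum_quadratic-example} of~$\fonctionprimal$ in the diagonal case: the cross term $\sum_{i\neq j}\module{\matrice_{ij}}\sqrt{\primal_i\primal_j}$ vanishes because $\matrice_{ij}=0$ for $i\neq j$, leaving $\fonctionprimal\np{\primal}=\sum_{i=1}^d\matrice_{ii}\primal_i-\sum_{i=1}^d\module{\vecteur_i}\sqrt{\primal_i}$ on~$\RR_+^d$ and $+\infty$ elsewhere. Substituting this into $\inf_{\primal\in\Convex}\fonctionprimal\np{\primal}$ and unfolding the definition of~$\Convex$ reproduces verbatim the announced convex minimization problem, while the associated argmin correspondence is inherited from~\eqref{eq:argmin}. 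I do not expect a genuine obstacle here; the only steps requiring (routine) verification are the convexity of~$\Convex$ and the identification of the feasible set with $\defset{\uncertain}{\np{\uncertain_1^2,\ldots,\uncertain_d^2}\in\Convex}$, both of which rest solely on the diagonality of~$S$.
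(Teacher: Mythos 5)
Your proposal is correct and takes essentially the same route as the paper: the paper's proof is a one-line application of Proposition~\ref{pr:ConditionalInfimum_quadratic-example} with exactly your convex set \( \Convex =\defset{ \np{\primal_1,\ldots,\primal_d} \in \RR_+^d }{ l \leq \sum_{i=1}^d S_{ii} \primal_i \leq u } \) and the sign choice \( \varepsilon = -\textrm{sign}\np{\vecteur} \). Your write-up merely fills in the routine verifications (that diagonality of~\( \matrice \) makes~\eqref{eq:varepsilon} hold, that \( \Convex \) is convex, and that the feasible sets coincide because \( \uncertain' S \uncertain = \sum_{i=1}^d S_{ii}\uncertain_i^2 \)), including the minor care of taking \( \varepsilon_i = 1 \) when \( \vecteur_i = 0 \), which the paper leaves implicit.
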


\begin{proof}
  It suffices to apply
  Proposition~\ref{pr:ConditionalInfimum_quadratic-example}
  with \( \Convex =\defset{ \np{\primal_1,\ldots,\primal_d} \in \RR_+^d }%
  { l \leq \sum_{i=1}^d S_{ii} \primal_i \leq u } \)
  and \( \varepsilon =- \textrm{sign} \np{\vecteur} \). 
\end{proof}

\section{Conclusion}
\label{Conclusion}

Detecting hidden convexity is one of the tools to address nonconvex
minimization problems.
In this paper, we have contributed to this research program by
giving a formal definition of hidden convexity in a minimization problem,
and by putting forward the notion of conditional infimum. 
Building upon a well-known parallelism between optimization and probability
theories, we have established a list of properties of the conditional infimum,
among which a tower formula, relevant for minimization problems. 
Thus equipped, we have provided sufficient conditions for 
  hidden convexity in nonconvex optimization problems, and we have illustrated our results on
  nonconvex quadratic minimization problems.
  We finish this conclusion by pointing out perspectives for using
  the conditional infimum in other contexts, namely in relation to
  the so-called S-procedure, to couplings and conjugacies,
  and to lower bound convex programs.
  \medskip
  
  The conditional infimum appears in the so-called S-procedure
  (see the survey paper~\cite{Polik-Terlaky:2007}),
itself related to hidden convexity (see~\cite{Ben-Tal-den-Hertog-Laurent:2011})
  as follows. 
Let $ \fonctiondepart_0, \fonctiondepart_1, \ldots, \fonctiondepart_p : \DEPART
\to \RR $ be functions, and consider the statements
\begin{subequations}
  \begin{align}
    (I)     \qquad &
                     \bp{ \fonctiondepart_i\np{\depart} \geq 0 \eqsepv \forall i=1,\ldots,p }
                     \implies
                     \fonctiondepart_0\np{\depart} \geq 0 
                     \eqfinv
                     \label{eq:(I)}
    \\
    (C) \qquad &
                 \exists\, \alpha_1 \geq 0, \ldots, \alpha_p \geq 0
                 \mtext{ such that }\fonctiondepart_0 - \sum_{i=1}^p  \alpha_i\fonctiondepart_i \geq 0
                 \eqfinp 
                 \label{eq:(C)}
  \end{align}
\end{subequations}
It is obvious that (C) $\implies$ (I). The S-procedure consists in finding
sufficient conditions to ensure that (I) $\implies$ (C), that is, 
conditions such that (I) + conditions $\implies$ (C).
We easily show that 
we can write statement~(I) in term of conditional infimum as 
\begin{equation}
     (I) 
          \iff
\InfCond{\fonctiondepart_0}{ \fonctiondepart_1, \ldots, \fonctiondepart_p
          }\np{\arrivee} \geq 0 
          \eqsepv \forall \arrivee \in \RR_+^p
\eqfinp           
\end{equation}

The conditional infimum is related to couplings and conjugacies. 
One-sided linear couplings are introduced
in~\cite{Chancelier-DeLara:2021_ECAPRA_JCA} as follows:
letting $\UNCERTAIN$ be a set and \( \theta: \UNCERTAIN \to \RR^d \) be a mapping, we define the
coupling~\( \Fenchelcoupling_\theta: \UNCERTAIN \times \RR^d \to \RR \)
by \( \Fenchelcoupling_\theta\couple{\dual}{\uncertain}
=\FenchelCoupling{\theta\np{\uncertain}}{\dual} \),
for any \( \np{\uncertain, \dual} \in \UNCERTAIN\times\RR^d \). 
Then, we show in 
\cite[Proposition~2.5]{Chancelier-DeLara:2021_ECAPRA_JCA}
that the $\Fenchelcoupling_\theta$-Fenchel-Moreau conjugate
\( \SFM{\fonctionuncertain}{\Fenchelcoupling_\theta} \) of a
function \( \fonctionuncertain : \UNCERTAIN \to \barRR \)
can be expressed as the Fenchel conjugate of the
conditional infimum~\( \InfimalPostComposition{\theta}{\fonctionuncertain} \):
    \begin{equation}
      \SFM{\fonctionuncertain}{\Fenchelcoupling_\theta}=
      \LFM{ \bp{\InfimalPostComposition{\theta}{\fonctionuncertain}} }
      \eqfinp
      \label{eq:CAPRA_Fenchel-Moreau_conjugate}
    \end{equation}
    It appears that one-sided linear couplings are also related to
    hidden convexity: in
    \cite[Proposition~2.6]{Chancelier-DeLara:2021_ECAPRA_JCA},
    we show that a function is $\Fenchelcoupling_\theta$-convex 
  if and only if it is the composition of a closed convex function on~$\RR^d$
  with the mapping~$\theta$ (see Footnote~\ref{ft:hidden_convexity_function}).
    More generally, letting $\UNCERTAIN$ be a set and
 \( \correspondence \subset \UNCERTAIN \times \RR^d \) 
be a correspondence between $\UNCERTAIN$ and~$\RR^d$,
  we define the coupling~$\Fenchelcoupling_{\correspondence}: \UNCERTAIN \times \RR^d \to \barRR $
by \( \Fenchelcoupling_{\correspondence}\np{\uncertain, \dual} 
= \sup_{\primal\in\uncertain\correspondence}\FenchelCoupling{\primal}{\dual} \),
for any \( \np{\uncertain, \dual} \in \UNCERTAIN\times\RR^d \).
Then, an easy computation shows that 
the $\Fenchelcoupling_{\correspondence}$-Fenchel-Moreau conjugate
\( \SFM{\fonctionuncertain}{\Fenchelcoupling_{\correspondence}} \) of a
function \( \fonctionuncertain : \UNCERTAIN \to \barRR \)
can be expressed as the Fenchel conjugate of the
conditional infimum~\( \InfimalPostComposition{\correspondence}{\fonctionuncertain} \):
  \begin{equation}
    \SFM{\fonctionuncertain}{\Fenchelcoupling_{\correspondence}}=
    \SFM{ \bp{\ConditionalInfimum{\correspondence}{\fonctionuncertain}} }{\Fenchelcoupling}
    \eqfinp 
  \end{equation}

  We can use the conditional infimum
  in Proposition~\ref{pr:inf_ConditionalInfimum_inf_original}
  to obtain lower bound convex programs for nonconvex problems as follows.
  Consider, on the one hand, a function \( \fonctionuncertain : \UNCERTAIN \to \barRR \)
  and a subset \( \Uncertain \subset \UNCERTAIN \),
  and, on the other hand,
  a correspondence~$\correspondence$ on~$\UNCERTAIN\times\RR^d$, 
  and a convex subset \( \Primal \subset \RR^d \).
  If \( \correspondence\Primal \subset \Uncertain \), we have the inequality
  \begin{equation}
   \inf_{\uncertain \in \Uncertain} \fonctionuncertain\np{\uncertain} 
    \geq 
     \underbrace{ \inf_{\primal \in \Primal}
       \LFMbi{\bp{\nInfCond{\fonctionuncertain}{\correspondence}}}\np{\primal} }%
     _{\textrm{lower bound convex program}}
\eqfinp      
\end{equation}
This may be interesting when \( \LFMbi{\fonctionuncertain} \)
is trivial, but
\( \LFMbi{\bp{\nInfCond{\fonctionuncertain}{\correspondence}}} \)
is not, like when \( \fonctionuncertain \) is the
\lzeropseudonorm\ on~$\RR^d$ and $\correspondence$ is
given by the normalization mapping onto the Euclidean sphere
\cite{Chancelier-DeLara:2021_ECAPRA_JCA}.
\bigskip

\textbf{Acknowledgements.} This note was inspired by the talk
given by Marc Teboulle at the One World Optimization Seminar on 
20 April 2020.
At this occasion, we discovered the paper~\cite{BenTal-Ben-Teboulle:1996} and
the use of the vocable ``hidden convexity''\footnote{%
  We have used the same expression ``hidden convexity'' in the title
  of~\cite{Chancelier-DeLara:2021_ECAPRA_JCA}
  without knowing the paper~\cite{BenTal-Ben-Teboulle:1996} at the time.}.

\newcommand{\noopsort}[1]{} \ifx\undefined\allcaps\def\allcaps#1{#1}\fi

\end{document}